\documentclass[hidelinks,onefignum,onetabnum]{siamart251216}

\usepackage{lipsum}
\usepackage{amsfonts}
\usepackage{graphicx}
\usepackage{epstopdf}
\usepackage{algorithmic}
\ifpdf
  \DeclareGraphicsExtensions{.eps,.pdf,.png,.jpg}
\else
  \DeclareGraphicsExtensions{.eps}
\fi

\newsiamremark{remark}{Remark}
\newsiamremark{hypothesis}{Hypothesis}
\crefname{hypothesis}{Hypothesis}{Hypotheses}
\newsiamthm{claim}{Claim}
\newsiamremark{fact}{Fact}
\crefname{fact}{Fact}{Facts}
\newsiamremark{assumption}{Assumption}
\crefname{assumption}{Assumption}{Assumptions}

\headers{Characterizing Optimality Robustness}{Y. Lin, Z. Duan, T. Li, N. Bai, and Z. Sun}

\title{Characterizing Robustness in Nonlinear Optimal Control: From Stability to Optimality
\thanks{Submitted to the editors August 5, 2026. A related preprint by some of the authors is available at arXiv:2604.05633. It develops an analysis-to-design framework for Koopman-based data-driven control, with the main focus on robustness-aware controller design and its computational implementation, whereas the present manuscript studies the characterization of optimality robustness itself from a broader nonlinear optimal control perspective.
\funding{This work was funded by the National Natural Science Foundation of China (NSFC) under grants T2121002, U24A20266, 62173006 and 62533017.}}}

\author{Yicheng Lin\thanks{School of Advanced Manufacturing and Robotics, Peking University, Beijing, 100871 China 
  (\email{linyc020709@stu.pku.edu.cn, duanzs@pku.edu.cn, tlee@stu.pku.edu.cn, zhiyong.sun@pku.edu.cn}).}
\and Zhisheng Duan\footnotemark[2] \and Tianzhi Li\footnotemark[2]
\and Nan Bai\thanks{Department of Electronic and Computer Engineering, The Hong Kong University of Science and Technology, Hong Kong SAR, 999077 China (\email{eenanbai@ust.hk})}\and Zhiyong Sun\footnotemark[2]}

\usepackage{amsopn}

\ifpdf
\hypersetup{
  pdftitle={Characterizing Optimality Robustness},
  pdfauthor={Y. Lin, Z. Duan, T. Li, N. Bai, and Z. Sun}
}
\fi

\begin{document}

\maketitle

\begin{abstract}
In nonlinear optimal control, uncertainties in system dynamics may affect not only closed-loop stability but also the achieved optimality properties of the resulting solutions. This paper develops a systematic robustness analysis for nonlinear optimal control beyond the conventional focus on stability in robust control theory. 
First, we demonstrate that the optimal value function retains its Lyapunov property under a quantifiable criterion, thereby guaranteeing the preservation of closed-loop stability. Building upon this foundation, we establish explicit characterizations for optimality deviations induced by model mismatch in both closed-loop performance and optimal controllers, and further reveal their consistency with classical linear-quadratic regulator (LQR) results. In addition, the robustness analysis admits a unified computational formulation that gives rise to an iterative scheme with guaranteed convergence, enabling quantitative assessment of optimality robustness in nonlinear control systems. Numerical examples validate the theoretical analysis.
\end{abstract}

\begin{keywords}
Robustness analysis, nonlinear optimal control, model uncertainty, nonlinear systems, optimality deviation.
\end{keywords}

\begin{MSCcodes}
49L20, 93C10, 49N10, 93D09
\end{MSCcodes}

\section{Introduction}
Optimal control aims at designing controllers that optimize desired performance objectives, which has long been a fundamental topic in control theory. The resulting optimal solutions are intrinsically determined by system dynamics through the optimality conditions \cite{liberzon2012calculus}. However, such dynamics are often subject to heterogeneous uncertainties including imperfect knowledge, modeling inaccuracies, and external disturbances \cite{hewing2020learning, deisenroth2015gp, lin2025integratinguncertainties}, which may affect the optimality properties of the resulting optimal solutions. This naturally motivates a critical question, namely, \textbf{how robustness should be understood in nonlinear optimal control beyond the conventional focus on stability in robust control theory}.

Robustness has long been recognized as a central concern in control theory. Classical robust control theories have primarily focused on preserving closed-loop stability and guaranteeing prescribed performance requirements in the presence of various uncertainties \cite{zhou1996robust}. These developments have established systematic analysis and synthesis approaches for robust stabilization and robust performance optimization \cite{khalil1996nonlinear, doyle1989state}. In the context of optimal control, existing studies have further incorporated uncertainty into the controller design process through various robust formulations, including min-max control, $H_\infty$ optimization, risk-sensitive control, and guaranteed-cost control \cite{aliyu2011nonlinear, ugrinovskii1999absolute, fleming1995risk}. Nevertheless, these approaches mainly address how to design controllers that achieve certain performance indices under uncertainty, while the characterization of how uncertainty affects the optimality property of a controller derived from an imperfect system description remains insufficiently understood.

In the linear setting, related questions admit clean answers in classical robust control theory \cite{zhou1996robust}, most notably through perturbation analysis of the algebraic Riccati equation (ARE) \cite{kenney1988sensitivity, konstantinov1993perturbation}, which yields explicit first-order sensitivity of the optimal performance to the modeling uncertainty. By contrast, in the nonlinear setting, one of the central challenges for such characterizations stems from the implicit nonlinear coupling between the optimal performance and the resulting controller through the optimality conditions. For instance, the associated Hamilton-Jacobi-Bellman (HJB) equation for nonlinear systems generally does not admit an explicit perturbation structure available in ARE. As a consequence, the impact of model mismatch on optimality becomes highly nontrivial to characterize.

This paper addresses the above challenges by establishing a \textit{systematic} characterization of robustness in nonlinear optimal control. Building upon the robust stability guarantees, we explicitly characterize the optimality deviation induced by model mismatch, which ultimately enables a \textit{quantitative} evaluation of optimality robustness with explicit connections to classical linear optimal control results. The main contributions are summarized as follows.
\begin{itemize}
	\item \textbf{Robust stability foundation:} We demonstrate that the nominal optimal value function remains valid as a Lyapunov candidate under the robust stability criterion, thereby preserving closed-loop stability. The resulting criterion further provides a quantitative guideline for the surrogate modeling accuracy to satisfy the fundamental requirement of robust stability.
	\item \textbf{Optimality deviation quantification:} We establish explicit characterizations of optimality deviations in both closed-loop performance and the resulting controller, revealing how model mismatch propagates through the optimal control structure. The analysis is developed in a hierarchical manner, where the performance deviation is evaluated for general nonlinear systems, while the controller deviation is further characterized explicitly for the control-affine dynamics. Moreover, the proposed framework consistently recovers classical sensitivity results in the linear quadratic regulator (LQR) setting, thereby establishing a theoretical bridge between nonlinear optimality robustness and ARE perturbation analysis.
	\item \textbf{Computational robustness evaluation:} We show that the optimality deviation analysis admits a unified computational formulation, where seemingly different deviation quantities can be reduced to a class of PDE-solving problems. Based on this formulation, an iterative algorithm with guaranteed convergence is developed for quantitative evaluation of optimality robustness.
\end{itemize}

\textit{Outline.} This paper is organized as follows. \Cref{2.preliminaries} introduces the problem formulation. Sections~\ref{3.stability} and \ref{4.optimality} investigate the issues of stability preservation and optimality deviation, respectively. \Cref{5.linear} exemplifies the proposed robustness analysis through the classical LQR framework, while \Cref{6.computation} develops the iterative algorithm for unified computation of optimality deviation together with numerical examples. \Cref{7.conclusion} concludes this paper.

\textit{Notations.} Throughout this paper, we denote by $\mathbb{R}^n$ the $n$-dimensional Euclidean space. Unless otherwise noted, the norm for a real vector $v\in\mathbb{R}^{n_v}$ is the Euclidean norm $\|v\|=\sqrt{\sum_{i=1}^{n_v}v_i^2} =\sqrt{v^\top v}$, meanwhile the norm for a real matrix $M=(m_{ij})\in\mathbb{R}^{n_r\times n_c}$ is the induced 2-norm which equals to the maximum singular value. The null and column spaces of $M$ are respectively denoted by $\mathrm{Nul}(M)$ and $\mathrm{Col}(M)$, while the direct sum of two subspaces is denoted by $\oplus$. For a symmetric matrix $M$, we denote by $\lambda_{\min}(M)$ and $\lambda_{\max}(M)$ its minimum and maximum eigenvalue, respectively. For two symmetric matrices $M_1,M_2$, $M_1 \succeq(\preceq) M_2$ means that matrix $M_1-M_2$ is positive (negative) semidefinite, i.e., $M_1-M_2\succeq(\preceq)0$. $\mathcal{C}^0(\cdot)$ denotes the space of continuous functions defined on the corresponding domains, and $\mathcal{C}^k(\cdot)$ the space of functions with continuous derivatives up to order $k$.

\section{Problem formulation}\label{2.preliminaries}
Consider a continuous-time nonlinear system represented by
\begin{equation}\label{original system}
	\dot{x}(t)=f\left(x(t),u(t)\right), \quad x(0)=x,
\end{equation}
where $x(t)\in\mathbb{X}\subseteq\mathbb{R}^n$ is the state, $u(t)\in\mathbb{U}\subseteq \mathbb{R}^m$ is the control input. We focus our analysis on a compact state space $\mathbb{X}$ containing the origin, which is assumed to be a forward-invariant domain under the optimal controllers to be discussed. The system dynamics $f:\mathbb{X}\times\mathbb{U}\rightarrow \mathbb{X}$ is $\mathcal{C}^2$, and the origin is an equilibrium of the unactuated system, i.e., $f(0,0)=0$. The objective of the optimal control problem is to minimize an infinite-horizon cost functional
\begin{equation}\label{optimal control problem}
	J(x_0,u(\cdot))=\int_{0}^{\infty}l(x(t),u(t)) \mathrm{d}t.
\end{equation}
The actual optimal value function is defined as
\begin{equation}
	V^*(x)=\inf_{u(\cdot)}J(x,u(\cdot)),
\end{equation}
and the corresponding actual optimal controller is denoted by $u^*(x)$. According to the classical optimal control theory~\cite{1995Linear}, $V^*(x)$ should satisfy the HJB equation
\begin{equation}\label{actual HJB}
	\inf_{u(\cdot)}\left\{\nabla V^*(x)f(x,u)+l(x,u)\right\}=0,
\end{equation}
where $\nabla V^*=\frac{\partial}{\partial x}V^*$ denotes the gradient of $V^*$.

In practice, optimal controllers are often synthesized based on an approximate surrogate model of the system dynamics, which may not exactly represent the actual dynamics due to modeling errors, uncertain parameters, or incomplete knowledge. Such model mismatch naturally arises in various scenarios, including first-principles modeling, data-driven identification methods, and learning-based approaches \cite{PNAS2016SINDy, lewis1999nncontrol, mauroy2020koopman}, and may affect the optimal solutions. Given a surrogate model, the robustness analysis developed in this paper is independent of the specific modeling procedure.

Accordingly, we consider a nominal surrogate model represented by
\begin{equation}\label{surrogate model}
	\dot{x}(t)=\hat{f}(x(t),u(t)).
\end{equation}
Since the surrogate model is only an approximation of the true dynamics, the actual system can be equivalently written as
\begin{equation}\label{actual model}
	\dot{x}(t)=\hat{f}(x(t),u(t))+r(x(t),u(t)),
\end{equation}
where the approximation error $r(x,u)$ represents the inevitable model mismatch induced by the surrogate modeling process. Based on the surrogate model \eqref{surrogate model}, the nominal optimal controller $u_0^*$ can be synthesized using various existing optimal control methods. The corresponding nominal value function $V_0^*$ is governed by
\begin{equation}\label{nominal HJB}
	\inf_{u(\cdot)}\left\{\nabla V_0^*(x)\hat{f}(x,u)+l(x,u)\right\}=0.
\end{equation}

The mismatch between nominal and actual systems leads to deviations between the corresponding optimal solutions, including the value functions $V^*$ and $V_0^*$ as well as the controllers $u^*$ and $u_0^*$. The subsequent robustness analysis aims to explicitly characterize such deviations. To this end, the following assumptions are imposed.
\begin{assumption}\label{Assum: running cost}
	The nominal dynamics $\hat{f}:\mathbb{X}\times\mathbb{U}\rightarrow \mathbb{X}$ in the surrogate model \eqref{surrogate model} and the running cost $l:\mathbb{X}\times\mathbb{U} \rightarrow\mathbb{R}_{\geq 0}$ in the functional~\eqref{optimal control problem} are $\mathcal{C}^2$, $\hat{f}(0,0)=0,\ l(0,0)=0$, and there exist constants $\alpha_1>0,\alpha_2>0$ such that
	\begin{equation}\label{running cost lower bound}
		l(x,u)\geq \alpha_1\|x\|^2+\alpha_2\|u\|^2,\quad\forall(x,u)\in\mathbb{X}\times\mathbb{U}.
	\end{equation}
\end{assumption}
\begin{remark}[Generality of cost]
	Assumption~\ref{Assum: running cost} preserves the positive-definite and coercive structure of the running cost $l$. Particularly, the standard form $l(x,u)=x^\top Qx+u^\top Ru$ in the LQR setting naturally satisfies \eqref{running cost lower bound} for $Q,R\succ0$, meanwhile more general nonlinear running costs with higher-order growth are also included. As a result, Assumption~\ref{Assum: running cost} serves as a natural extension of classical linear optimal control theory to nonlinear settings. Such a condition ensures sufficient penalization of large states and control inputs.
\end{remark}
\begin{assumption}\label{Assum: value functions}
	The value functions $V^*,V_0^*:\mathbb{X}\rightarrow\mathbb{R}_{\geq 0}$ given by \eqref{actual HJB} and \eqref{nominal HJB} respectively are classical solutions such that $V^*,V_0^*\in\mathcal{C}^2(\mathbb{X})$. Further, it is assumed that $u^*$ is the unique minimizer for \eqref{actual HJB}, and analogously for $u_0^*$ in \eqref{nominal HJB}.
\end{assumption}
\begin{remark}[Regularity of value function]
	Under this standing assumption, our focus is to characterize the impacts of uncertainties on stability and optimality, rather than on analyzing the existence and uniqueness of solutions. In addition, for the class of nonlinear optimal control problems considered here, the existence of a unique $\mathcal{C}^2$ solution to the HJB equation \eqref{actual HJB} or \eqref{nominal HJB} in a neighborhood of the origin is ensured by~\cite{lukes1969optimal}, provided that the system is locally stabilizable and the running cost satisfies \eqref{running cost lower bound}. In more general settings, the value functions can be interpreted in the viscosity sense \cite{bardi1997optimal}, and the results in this paper can be extended to weaker notions of solutions with additional technical efforts in the future.
\end{remark}
\begin{assumption}\label{Assum: error bound}
	The approximation error term in \eqref{actual model} is bounded by
	\begin{equation}\label{error bound}
		\|r(x,u)\|\leq r_1(x,u)\|x\|+r_2(x,u)\|u\|,
	\end{equation}
	where $r_1,r_2$ are continuous and uniformly bounded in $\mathbb{X}\times\mathbb{U}$, i.e., $\|r_1(x,u)\|\leq c_1, \|r_2(x,u)\|\leq c_2$.
\end{assumption}
\begin{remark}[On the uncertainty bound]
	Assumption~\ref{Assum: error bound} imposes a (semi-)linear growth bound on the approximation error with respect to the state and control input. From a theoretical viewpoint, it naturally arises from the local smoothness of both the actual and nominal dynamics given Assumption~\ref{Assum: running cost}. In particular, since $r(0,0)=f(0,0) - \hat{f}(0,0)$, continuous and bounded Jacobians $\frac{\partial r}{\partial x},\frac{\partial r}{\partial u}$ over a compact operating region $\mathbb{X}\times\mathbb{U}$ imply the state- and input-dependent uncertainty bound in \eqref{error bound}. Such a characterization is consistent with the vanishing perturbation condition in nonlinear robust control \cite{khalil1996nonlinear}.
	
	From a practical perspective, the explicit characterization of model mismatch has received increasing attention in modern modeling and control, enabling rigorous robustness analysis beyond empirical model fitting. For instance, proportional state- and input-dependent error bounds have been established for Koopman-based surrogate models~\cite[Proposition 5]{Strasser2024Koopman}, while deterministic error bounds have been derived for kernel-based learning models as uncertainty certificates~\cite[Theorem 1]{maddalena2021deterministic}. Although the specific forms of such bounds vary with different modeling approaches, they provide a common basis for quantifying approximation errors in surrogate models. Motivated by these developments, Assumption~\ref{Assum: error bound} adopts a growth-bound description that is suitable for robustness analysis in nonlinear optimal control problem.
\end{remark}

\section{Robustness of stability}\label{3.stability}
Under the nominal settings where $r(x,u)=0$, the nominal optimal value function $V_0^*$ serves as a Lyapunov function, since the HJB optimality condition together with Assumption~\ref{Assum: running cost} yields $\dot{V}_0(x)=-l(x,u_0^*)<0$. However, when the nominal optimal controller $u_0^*$ is applied to the actual system~\eqref{actual model}, the additional approximation error term may destroy this stabilizing property. Therefore, before investigating the robustness of optimality, it is necessary to characterize whether the closed-loop stability can be preserved under model mismatch.

The norm-bounded characterization \eqref{error bound} enables a tractable Lyapunov-based analysis of robust stability, while preserving a general applicability across different techniques. Based on Assumption~\ref{Assum: error bound}, we define the set $\mathcal{R}$ of admissible approximation error satisfying \eqref{error bound}, i.e., 
\begin{equation}
	\mathcal{R}=\{r(x,u)|\|r(x,u)\|\leq c_1\|x\|+c_2\|u\|\}.
\end{equation}

Before investigating the robustness of stability, we introduce the following lemma to characterize a key property of the value function.
\begin{lemma}\label{Lem: gradient of value function}
	Under Assumptions~\ref{Assum: running cost}, \ref{Assum: value functions} and \ref{Assum: error bound}, the gradient of nominal value function $\nabla V_0^*(x)$ is Lipshitz continuous in $\mathbb{X}$. Hence, there exists $L>0$ such that $\| \nabla V_0^*(x)\| \leq L\|x\|,\forall x\in\mathbb{X}$.
\end{lemma}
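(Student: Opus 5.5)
The plan has three steps: get Lipschitz continuity of $\nabla V_0^*$ from its regularity, show $\nabla V_0^*(0)=0$ from the minimum of $V_0^*$ at the origin, and combine the two.

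\emph{Step 1: Lipschitz continuity.} By Assumption~\ref{Assum: value functions}, $V_0^*\in\mathcal{C}^2(\mathbb{X})$, so the Hessian $\nabla^2 V_0^*$ is continuous on the compact set $\mathbb{X}$. Hence $\nabla^2 V_0^*$ is bounded there, and I set $L:=\max_{x\in\mathbb{X}}\|\nabla^2V_0^*(x)\|$. For $x,y\in\mathbb{X}$ the mean value inequality, applied along the segment joining them, gives $\|\nabla V_0^*(x)-\nabla V_0^*(y)\|\le L\|x-y\|$.

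This needs the segment to lie in $\mathbb{X}$. When $y=0$ that holds if $\mathbb{X}$ is star-shaped with respect to the origin, which I will assume for the operating region, or arrange by working on a convex compact neighbourhood. If $\mathbb{X}$ is not star-shaped, I would instead use compactness and a $\mathcal{C}^1$ extension of $V_0^*$ to the convex hull of $\mathbb{X}$. This geometric point is the only real obstacle; the remaining steps are routine.

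\emph{Step 2: $\nabla V_0^*(0)=0$.} By Assumption~\ref{Assum: running cost}, $l\ge0$, $l(0,0)=0$ and $\hat f(0,0)=0$. The control $u\equiv0$ keeps the state at the origin with zero cost, so $V_0^*(0)=0$. Since $V_0^*\ge0$ everywhere, the origin is a global minimiser of $V_0^*$. The origin is interior to $\mathbb{X}$ (it is contained in $\mathbb{X}$ as an equilibrium, and I take it to be an interior point of the operating domain), so the first-order condition gives $\nabla V_0^*(0)=0$.

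\emph{Step 3: the bound.} Taking $y=0$ in Step~1 and using Step~2 yields $\|\nabla V_0^*(x)\|=\|\nabla V_0^*(x)-\nabla V_0^*(0)\|\le L\|x\|$ for all $x\in\mathbb{X}$. If the Hessian bound happens to vanish, any positive constant will do, so $L>0$ can be ensured.

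Assumption~\ref{Assum: error bound} plays no role in this nominal statement. The quadratic growth of $l$ is not needed either, beyond giving $l\ge0$ and $V_0^*(0)=0$.
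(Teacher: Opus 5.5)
Your proposal is correct and follows the paper's proof essentially step for step. Both bound the Hessian on the compact set $\mathbb{X}$, get $\nabla V_0^*(0)=0$ from the minimum of $V_0^*$ at the origin, and integrate $\nabla^2V_0^*(tx)\,x$ over $t\in[0,1]$. The paper also relies, without saying so, on the star-shapedness and interior-origin conditions you flag.

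One small correction concerns your fallback. A $\mathcal{C}^1$ extension of $V_0^*$ would not control the Hessian on the convex hull; you would need a $\mathcal{C}^2$ extension. In fact the bound $\|\nabla V_0^*(x)\|\le L\|x\|$ needs no geometry beyond the origin being interior. It holds on a small ball $B_\rho(0)\subset\mathbb{X}$ by the Hessian bound, and on $\mathbb{X}\setminus B_\rho(0)$ with constant $\max_{\mathbb{X}}\|\nabla V_0^*\|/\rho$.
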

\begin{proof}
	Since $V_0^*(x)\in\mathcal{C}^2(\mathbb{X})$ by Assumption~\ref{Assum: value functions}, its Hessian matrix $\nabla^2 V_0^*(x)$ is continuous and uniformly bounded on the compact $\mathbb{X}$. Let $L=\text{ess}\sup_{x\in\mathbb{X}}\|\nabla^2 V_0^*(x)\|$. Recalling that with Assumption~\ref{Assum: running cost} for the running cost $l$, $V_0^* (x)\in\mathcal{C}^2(\mathbb{X})$ is a positive definite value function that achieves its minimum at the origin, $\nabla V_0^*(0)=0$ holds.
	
	By the mean value theorem in integral form, $\nabla V_0^*(x)=\int_{0}^{1} \nabla^2V_0^*(tx)\mathrm{d}t\cdot x$ holds. Taking the norm on both sides yields
	\begin{equation*}
		\|\nabla V_0^*(x)\|\leq \int_{0}^{1}\left\|\nabla^2V_0^*(tx)\right\|\mathrm{d}t\cdot\|x\| \leq L\|x\|.
	\end{equation*} The proof is completed.
\end{proof}

With the aid of Lemma~\ref{Lem: gradient of value function}, robustness of stability is established by the following theorem.
\begin{theorem}\label{Thm: Robust Stability}
	Let Assumptions~\ref{Assum: running cost}, \ref{Assum: value functions} and \ref{Assum: error bound} hold. If the error bound coefficients $c_1>0,c_2>0$ satisfy the \textit{robust stability criterion}
	\begin{equation}\label{robust stability criterion}
		c_1L+\frac{c_2^2}{4\alpha_2}L^2<\alpha_1,
	\end{equation}
	then the nominal optimal controller $u_0^*$ given by \eqref{nominal HJB} asymptotically stabilizes the nonlinear system \eqref{actual model} for any $r(x,u)\in\mathcal{R}$. Hence, the nominal optimal controller $u_0^*$ stabilizes the original nonlinear system \eqref{original system}.
\end{theorem}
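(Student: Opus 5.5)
We take the nominal value function $V_0^*$ as a Lyapunov candidate for the actual closed loop $\dot x=\hat f(x,u_0^*(x))+r(x,u_0^*(x))$. It is nonnegative and vanishes at the origin. Since $V_0^*(x)=\int_0^\infty l\,\mathrm{d}t\geq \alpha_1\int_0^\infty\|x(t)\|^2\mathrm{d}t>0$ for $x\neq 0$, Assumption~\ref{Assum: running cost} makes it positive definite. It is $\mathcal{C}^2$ by Assumption~\ref{Assum: value functions}.

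Along actual trajectories,
\begin{equation*}
\dot V_0^*=\nabla V_0^*(x)\hat f(x,u_0^*)+\nabla V_0^*(x)r(x,u_0^*).
\end{equation*}
Because $u_0^*$ is the unique minimizer in \eqref{nominal HJB}, the first term equals $-l(x,u_0^*)$. We bound the second term using Lemma~\ref{Lem: gradient of value function} together with the set $\mathcal{R}$:
\begin{equation*}
\dot V_0^*\leq -\alpha_1\|x\|^2-\alpha_2\|u_0^*\|^2+c_1L\|x\|^2+c_2L\|x\|\|u_0^*\|.
\end{equation*}

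The main step is to absorb the cross term $c_2L\|x\|\|u_0^*\|$ into the control penalty. We complete the square in $\|u_0^*\|$:
\begin{equation*}
-\alpha_2\|u\|^2+c_2L\|x\|\|u\|\leq \frac{c_2^2L^2}{4\alpha_2}\|x\|^2.
\end{equation*}
This is exactly where the weight $c_2^2/(4\alpha_2)$ in \eqref{robust stability criterion} comes from. It gives
\begin{equation*}
\dot V_0^*\leq -\Bigl(\alpha_1-c_1L-\frac{c_2^2L^2}{4\alpha_2}\Bigr)\|x\|^2=: -\gamma\|x\|^2,
\end{equation*}
with $\gamma>0$ by the criterion. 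If we want strictness that also keeps some of the control penalty, we can split $\alpha_2$ slightly, but this is not needed.

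To conclude asymptotic stability, we invoke Lyapunov's theorem on the compact forward-invariant set $\mathbb{X}$. $V_0^*$ is continuous and positive definite, so it is bounded below by a class-$\mathcal{K}$ function of $\|x\|$ on $\mathbb{X}$, and its derivative is negative definite. Alternatively, we can integrate: $\int_0^\infty\|x\|^2\mathrm{d}t\leq V_0^*(x_0)/\gamma$. Since $\dot x$ is bounded on compact $\mathbb{X}$, Barbalat's lemma then gives $x(t)\to 0$, and stability follows from the sublevel sets of $V_0^*$.

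I expect the main obstacle to be technical rather than computational. Forward invariance of $\mathbb{X}$ under the perturbed closed loop is assumed, not derived, so the argument should be stated on sublevel sets of $V_0^*$ contained in $\mathbb{X}$. The statement "for any $r\in\mathcal{R}$" then follows because only the norm bound was used. Since the original system \eqref{original system} is of the form \eqref{actual model} with $r=f-\hat f\in\mathcal{R}$ by Assumption~\ref{Assum: error bound}, the final claim follows.
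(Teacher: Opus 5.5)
Your proposal is correct and follows essentially the paper's argument: the same Lyapunov candidate $V_0^*$, the same derivative bound via Lemma~\ref{Lem: gradient of value function}, and the same absorption of the cross term $c_2L\|x\|\|u_0^*\|$ into the control penalty. The only difference is cosmetic. The paper uses Young's inequality with a free parameter $\beta$ and shows that a suitable $\beta$ exists if and only if \eqref{robust stability criterion} holds, whereas your completion of the square is the limiting choice $\beta=c_2L/(2\alpha_2)$; this gives the criterion in one step, with the explicit rate $\gamma=\alpha_1-c_1L-c_2^2L^2/(4\alpha_2)$, which is larger than the constant $\mu$ the paper later extracts from its admissible $\beta$.
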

\begin{proof}
	First, we calculate the time derivative of nominal value function $V_0^*$ along the system \eqref{actual model} controlled by $u_0^*$, i.e.,
	\begin{equation}
		\begin{aligned}
			\frac{\mathrm{d}V_0^*}{\mathrm{d}t}&=\left(\nabla V_0^*\right)^\top\left[\hat{f}(x(t), u_0^*(t))+r(x(t),u_0^*(t))\right] \\&=-l(x(t),u_0^*(t))+\left(\nabla V_0^*\right)^\top r(x(t),u_0^*(t)).
		\end{aligned}
	\end{equation}
	With Assumption~\ref{Assum: error bound} and Lemma~\ref{Lem: gradient of value function}, we obtain
	\begin{equation}
		\begin{aligned}
			\frac{\mathrm{d}V_0^*}{\mathrm{d}t}&\leq-l(x(t),u_0^*(t))+\|\nabla V_0^*\|\left( c_1\|x(t)\|+c_2\|u_0^*(t)\|\right) \\ &\leq -l(x(t),u_0^*(t))+c_1L\|x(t)\|^2+c_2L\|x(t)\|\|u_0^*(t)\|.
		\end{aligned}
	\end{equation}
	Since the inequality
	\begin{equation}
		\|x(t)\|\|u_0^*(t)\|\leq\frac{1}{2}\left(\beta\|x(t)\|^2+\frac{1}{\beta}\|u_0^*(t)\|^2 \right)
	\end{equation}
	holds for any $\beta>0$, the time derivative of $V_0^*$ satisfies
	\begin{equation}\label{time derivative of V0}
		\begin{aligned}
			\frac{\mathrm{d}V_0^*}{\mathrm{d}t}\leq -\alpha_1\|x(t)\|^2-\alpha_2\|u_0^*(t)\|^2 +\left(c_1+\frac{1}{2}\beta c_2\right)L\|x(t)\|^2+\frac{1}{2\beta} c_2L\|u_0^*(t)\|^2.
		\end{aligned}
	\end{equation}
	If there exists a constant $\beta>0$ such that $c_1+\frac{1}{2}\beta c_2<\frac{\alpha_1}{L}$ and $c_2<\frac{2\beta\alpha_2}{L}$, the time derivative of $V_0^*(x)$ is negative definite. With the definition of the value function and Assumption~\ref{Assum: running cost}, $V_0^*(0)=0$ and $V_0^*(x)>0,\forall x\neq 0$ naturally hold. Therefore, $V_0^*$ is a Lyapunov function for any possible nonlinear system \eqref{actual model} satisfying $r(x,u)\in\mathcal{R}$, which indicates that $u_0^*$ stabilizes the original system \eqref{original system}.
	
	Note that
	\begin{subequations}\label{beta condition}
		\begin{align}
			c_1+\frac{1}{2}\beta c_2<\frac{\alpha_1}{L} &\Leftrightarrow \beta<\frac{2\alpha_1-2c_1L}{c_2L}, \\
			c_2<\frac{2\beta\alpha_2}{L} &\Leftrightarrow \beta>\frac{c_2L}{2\alpha_2}.
		\end{align}
	\end{subequations}
	Therefore, there exists $\beta$ satisfying \eqref{beta condition} if and only if \begin{equation}
		\frac{c_2L}{2\alpha_2}< \frac{2\alpha_1-2c_1L}{c_2L},
	\end{equation}which is equivalent to the robust stability criterion \eqref{robust stability criterion}. The proof is completed.
\end{proof}

Theorem~\ref{Thm: Robust Stability} reveals an intrinsic synergy between optimality and stabilization in the presence of model mismatch. It highlights that in the considered infinite-horizon optimal control problem, the pursuit of optimality does not necessarily compromise closed-loop stability under model mismatch. Instead, stability can still be retained under appropriate conditions. As long as the data-driven surrogate model maintains a sufficient degree of accuracy, the nominal optimal controller is capable of preserving closed-loop stability.

From a complementary viewpoint, the robust stability criterion \eqref{robust stability criterion} provides an uncertainty budget for the basic requirement of stability in nonlinear optimal control. It establishes a rigorous threshold for the approximation error bound $(c_1,c_2)$, beyond which the stabilizing property of the nominal optimal controller $u_0^*$ may no longer be guaranteed. In practical control applications, obtaining a sufficiently accurate surrogate model can be challenging due to modeling complexity and inevitable approximation error. As a consequence, Theorem~\ref{Thm: Robust Stability} provides a practical guideline for surrogate modeling, indicating the required accuracy to guarantee closed-loop stability.

Following the same line of analysis, the asymptotic stability of the actual optimal controller can be established as a direct consequence.

\begin{corollary}\label{Coro: V^*}
	Let Assumptions~\ref{Assum: running cost}-\ref{Assum: value functions} hold. Then, the actual optimal controller $u^*$ obtained from \eqref{actual HJB} renders the system~\eqref{original system} asymptotically stable.
\end{corollary}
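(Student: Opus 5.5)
The argument repeats the Lyapunov argument of Theorem~\ref{Thm: Robust Stability} with the actual value function $V^*$ in place of $V_0^*$ and with zero model mismatch. Relative to the true dynamics $f$, the actual optimal controller $u^*$ plays the role that $u_0^*$ played for $\hat{f}$, so no error bound (and hence no Assumption~\ref{Assum: error bound} or criterion \eqref{robust stability criterion}) is needed.

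\textbf{Step 1 (candidate).} I will show that $V^*$ is a valid Lyapunov candidate. By Assumption~\ref{Assum: value functions}, $V^*\in\mathcal{C}^2(\mathbb{X})$. From \eqref{running cost lower bound}, any admissible input from $x\neq 0$ incurs a strictly positive cost. Near $t=0$ the trajectory stays close to $x$, so $l\geq\alpha_1\|x(t)\|^2$ is bounded away from zero on a short interval, which gives $V^*(x)>0$. Also $V^*(0)=0$, using $u\equiv 0$ together with $f(0,0)=0$ and $l(0,0)=0$.

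\textbf{Step 2 (derivative).} Since $u^*$ is the unique minimizer in \eqref{actual HJB}, the HJB equation gives, along closed-loop trajectories of \eqref{original system} under $u^*$,
\[
\frac{\mathrm{d}V^*}{\mathrm{d}t}=\left(\nabla V^*\right)^\top f(x,u^*(x))=-l(x,u^*(x))\leq-\alpha_1\|x\|^2-\alpha_2\|u^*(x)\|^2\leq -\alpha_1\|x\|^2 .
\]
This is the computation in the proof of Theorem~\ref{Thm: Robust Stability} with $r\equiv 0$.

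\textbf{Step 3 (conclusion).} The derivative is negative definite. $\mathbb{X}$ is compact and assumed forward invariant under the optimal controllers, and $V^*$ is continuous and positive definite. Lyapunov's direct method therefore yields asymptotic stability of the origin for \eqref{original system} under $u^*$. If desired, Step 2 combined with $V^*(x)\leq \tfrac{L^*}{2}\|x\|^2$ gives exponential decay. Here $L^*$ is the Hessian bound for $V^*$, obtained by applying Lemma~\ref{Lem: gradient of value function} verbatim to $V^*$, since that lemma only uses the $\mathcal{C}^2$ regularity and $\nabla V^*(0)=0$.

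The only point requiring care is the strict positive definiteness of $V^*$ in Step 1. Everything else is a direct transcription of the earlier proof.
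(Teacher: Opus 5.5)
Your proof is correct and follows the paper's argument: by \eqref{actual HJB} and the minimizing property of $u^*$, you get $\dot V^*=-l(x,u^*)\le-\alpha_1\|x\|^2$ along the closed loop, so $V^*$ is a Lyapunov function. Your Step~1 supplies the positive definiteness of $V^*$ that the paper takes for granted, but ``the trajectory stays close to $x$'' must hold uniformly over all inputs, which needs $f$ bounded on $\mathbb{X}\times\mathbb{U}$ (e.g.\ compact $\mathbb{U}$); alternatively, an interior zero $x\neq0$ of $V^*\ge0$ would be a minimum with $\nabla V^*(x)=0$, so \eqref{actual HJB} would force $\inf_u l(x,u)=0$, contradicting \eqref{running cost lower bound}.
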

\begin{proof}
	The proof of Corollary~\ref{Coro: V^*} follows similar thoughts with Theorem~\ref{Thm: Robust Stability}. The time derivative of $V^*$ given by \eqref{actual HJB} along the system \eqref{original system} controlled by $u^*$ satisfies $\dot{V}^*=-l(x(t),u^*(t))$, which holds negative definiteness with Assumption~\ref{Assum: running cost}. Thus, $V^*$ is a Lyapunov function, and the closed-loop system \eqref{original system} controlled by $u^*$ is asymptotically stable.
\end{proof}

\section{Robustness of optimality}\label{4.optimality}
While Section~\ref{3.stability} establishes conditions for preserving closed-loop stability, the achieved performance may deviate from the true optimum for the actual system under model mismatch. Such discrepancy in the achieved optimality is referred to in this paper as \textit{optimality deviation}, a general concept that encompasses both performance loss and controller deviation.

Accordingly, we shift our focus from stability to optimality, quantifying how model mismatch affects the optimal solutions in two complementary steps. Firstly, we analyze the optimality deviation in the value function, which captures performance loss induced by model mismatch. Secondly, we study the optimality deviation in the optimal controller, which reflects how the resulting feedback is perturbed. The two-step analysis is motivated by the intrinsic structure of optimal control, where the optimal controller is implicitly characterized through the value function via the HJB equation. Understanding perturbations in the value function thus provides the foundation for analyzing deviations in the controller.

\subsection{Deviation of performance}\label{4.1 performance deviation}
Since the approximation error $r(x,u)$ perturbs the system behavior, it affects the closed-loop performance \eqref{optimal control problem} under the same controller $u(\cdot)$. To facilitate the subsequent analysis, we use the notation $J(u,x,r)$. Apparently, $V_0^*(x)=J(u_0^*,x,0)$ and $V^*(x)=J(u^*,x,r)$ hold by their definitions. We further define a value function $V(x)=J(u_0^*, x,r)$, which characterize closed-loop performance \eqref{optimal control problem} of the actual nonlinear system \eqref{original system} controlled by the nominal optimal controller~$u_0^*$.

First, we consider the optimality deviation of nominal value function $V_0^*(x)$ with the following theorem. We note that the gradient-energy integral term will be rigorously justified to be finite after the theorem.
\begin{theorem}\label{Thm: performance deviation}
	Due to the model mismatch, applying the nominal optimal controller $u_0^*$ given by \eqref{nominal HJB} to the actual nonlinear system \eqref{original system} leads to an extra cost in \eqref{optimal control problem}. With Assumptions~\ref{Assum: running cost}, \ref{Assum: value functions} and \ref{Assum: error bound}, this extra cost is characterized by
	\begin{equation}\label{optimal performance deviation}
		\begin{aligned}
			\left\|V-V_0^*\right\|\leq \Delta V_{\max}=&\frac{1}{2}C_{12}^2\int_{0}^\infty\|\nabla V_0^*\|^2 \mathrm{d}t \\
			&+\frac{C_{12}}{2}\left(\int_{0}^\infty\|\nabla V_0^*\|^2 \mathrm{d}t\right)^{\frac{1}{2}}\sqrt{C_{12}^2\int_{0}^\infty\|\nabla V_0^*\|^2 \mathrm{d}t+4V_0^*}.
		\end{aligned}
	\end{equation}
	where $C_{12}=\max\left\{\sqrt{\frac{2}{\alpha_1}}c_1, \sqrt{\frac{2}{\alpha_2}}c_2\right\}$, and the integral term of $\|\nabla V_0^*\|^2$ is evaluated along the trajectory controlled by $u_0^*$ under the worst-case error $r_0^*$ given by \eqref{nominal worst-case error}, i.e.,
	\begin{equation}\label{nominal worst trajectory u0}
		\dot{x}(t)=\hat{f}(x(t),u_0^*(t))+r_0^*(x(t),u_0^*(t)),\ x(0)=x.
	\end{equation}
\end{theorem}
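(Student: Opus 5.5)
The plan is to express the extra cost $V-V_0^*$ exactly as an integral of the Lyapunov-derivative mismatch along the actual closed-loop trajectory, bound it by Cauchy--Schwarz, and then close a quadratic inequality in $\sqrt{V}$. Fix $x\in\mathbb{X}$ and $r\in\mathcal{R}$, and let $x(t)$ be the trajectory of \eqref{actual model} under $u_0^*$ with $x(0)=x$.

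\textbf{Step 1: an exact identity for $V-V_0^*$.} As in the proof of Theorem~\ref{Thm: Robust Stability}, the nominal HJB \eqref{nominal HJB} gives
\[
\frac{\mathrm{d}}{\mathrm{d}t}V_0^*(x(t))=-l(x(t),u_0^*(t))+\nabla V_0^*(x(t))^\top r(x(t),u_0^*(t)).
\]
I will work under the robust stability criterion \eqref{robust stability criterion}. Theorem~\ref{Thm: Robust Stability} then gives $x(t)\to 0$, and since $V_0^*$ is continuous with $V_0^*(0)=0$, we get $V_0^*(x(t))\to 0$. Integrating over $[0,T]$ and letting $T\to\infty$ yields
\[
V(x)-V_0^*(x)=\int_0^\infty \nabla V_0^{*\top} r\,\mathrm{d}t .
\]
Passing to the limit requires that $\int_0^\infty l\,\mathrm{d}t=V(x)$ and the gradient-energy integral be finite. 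The paper defers that justification to after the theorem, so here I take both as finite.

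\textbf{Step 2: Cauchy--Schwarz plus the coercivity of $l$.} Assumption~\ref{Assum: error bound} gives $|\nabla V_0^{*\top} r|\le \|\nabla V_0^*\|(c_1\|x\|+c_2\|u_0^*\|)$. Cauchy--Schwarz in $L^2(0,\infty)$ then gives
\[
|V-V_0^*|\le \Big(\int_0^\infty\|\nabla V_0^*\|^2\mathrm{d}t\Big)^{1/2}\Big(\int_0^\infty (c_1\|x\|+c_2\|u_0^*\|)^2\mathrm{d}t\Big)^{1/2}.
\]
For the second factor, $(c_1\|x\|+c_2\|u\|)^2\le 2c_1^2\|x\|^2+2c_2^2\|u\|^2\le C_{12}^2(\alpha_1\|x\|^2+\alpha_2\|u\|^2)$. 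By \eqref{running cost lower bound} this is at most $C_{12}^2\, l(x,u)$, and this is exactly where the constant $C_{12}$ comes from. Writing $G=\int_0^\infty\|\nabla V_0^*\|^2\mathrm{d}t$, we obtain
\[
|V-V_0^*|\le C_{12}\sqrt{G}\,\sqrt{V}.
\]

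\textbf{Step 3: solving the implicit inequality.} Put $y=\sqrt{V}\ge 0$. Step 2 gives $y^2-C_{12}\sqrt{G}\,y-V_0^*\le 0$, so $y$ is at most the positive root:
\[
y\le \tfrac12\Big(C_{12}\sqrt{G}+\sqrt{C_{12}^2G+4V_0^*}\Big).
\]
Substituting back into $|V-V_0^*|\le C_{12}\sqrt{G}\,y$ gives
\[
|V-V_0^*|\le \tfrac12 C_{12}^2G+\tfrac{C_{12}}{2}\sqrt{G}\sqrt{C_{12}^2G+4V_0^*},
\]
which is precisely $\Delta V_{\max}$ in \eqref{optimal performance deviation}.

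\textbf{Main obstacle: removing the dependence on the particular $r$.} The bound just derived holds for each $r\in\mathcal{R}$, but with $G$ evaluated along that $r$'s own trajectory, whereas the statement uses the single trajectory \eqref{nominal worst trajectory u0} driven by $r_0^*$. I would handle this by taking the supremum over $\mathcal{R}$ of the right-hand side, which is increasing in $G$. The remaining task is to identify that maximizer with the worst-case error $r_0^*$ of \eqref{nominal worst-case error}. That definition is given later in the paper, so I would invoke it there, presumably as the admissible error that maximizes the gradient-energy integral. The finiteness of $G$ itself rests on Lemma~\ref{Lem: gradient of value function}, which gives $\|\nabla V_0^*\|^2\le L^2\|x\|^2\le (L^2/\alpha_1)\,l$, so $G\le (L^2/\alpha_1)V<\infty$.
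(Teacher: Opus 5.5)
Your Steps 1--3 are correct, but they establish $|V-V_0^*|\le\Delta V_{\max}$ with the gradient energy $G_r$ taken along the trajectory driven by the \emph{actual} mismatch $r$. The bridge you propose from there to the stated bound does not work. The error $r_0^*$ in \eqref{nominal worst-case error} is not the maximizer of the gradient-energy integral. It is the maximizer of the \emph{cost}, $r_0^*=\arg\max_{r\in\mathcal{R}}J(u_0^*,x,r)$, aligned with $\nabla V_r^*$, where $V_r^*$ solves the HJI equation \eqref{actual HJI}. The admissible error that maximizes $G_r$ is instead aligned with $\nabla V_{\text{grad}}$ from \eqref{compute maximum integral V_0 gradient}, a different PDE whose running cost is $\|\nabla V_0^*\|^2$ rather than $l$. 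So all you know is $\sup_{r\in\mathcal{R}}G_r\ge G_{r_0^*}$, in general strictly. Since $\Delta V_{\max}$ is increasing in $G$, taking the supremum gives a bound weaker than the one claimed: the comparison runs in the wrong direction.

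The missing idea is to route the argument through the worst-case value function $V_r^*=J(u_0^*,x,r_0^*)$ instead of through the unknown $r$. Apply your Steps 1--3 with $r=r_0^*$ itself. The cost along \eqref{nominal worst trajectory u0} is then exactly $V_r^*$, and your identity reads $V_r^*-V_0^*=\int_0^\infty\nabla V_0^{*\top}r_0^*\,\mathrm{d}t$ along that trajectory; this is the paper's computation of $\tfrac{\mathrm{d}}{\mathrm{d}t}(V_r^*-V_0^*)$ via \eqref{nominal HJB} and \eqref{actual HJI}. Cauchy--Schwarz and the quadratic inequality in $(V_r^*)^{1/2}$ then give $V_r^*-V_0^*\le\Delta V_{\max}$ with precisely the stated $G$. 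For an arbitrary $r\in\mathcal{R}$, maximality of $r_0^*$ gives $V=J(u_0^*,x,r)\le V_r^*$, hence $V-V_0^*\le V_r^*-V_0^*\le\Delta V_{\max}$, with no reference to the trajectory of $r$.

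This is the point of the statement: $G$ along $r_0^*$ is computable from $\hat f$, $V_0^*$ and $(c_1,c_2)$ alone (e.g.\ via Algorithm~1), whereas your $G_r$ requires the true dynamics. What your signed identity does buy is both directions at once. The paper instead treats $V<V_0^*$ separately, using the cost-minimizing error $r_0^{**}$ ``by symmetry''. Strictly speaking, that step produces the gradient energy along the $r_0^{**}$-driven trajectory rather than \eqref{nominal worst trajectory u0}, so your instinct that the choice of trajectory is the real crux is well founded.
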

\begin{proof}
	In order to bound $V-V_0^*$, we firstly investigate the worst-case approximation error $r_0^*=\arg\max_{r\in\mathcal{R}}J(u_0^*,x,r)$ that aims to maximize the cost functional \eqref{optimal control problem}. The corresponding worst-case value function over all admissible errors is denoted by $V_r^*(x)=\max_{r\in\mathcal{R}}J(u_0^*,x,r)$. Using the Hamilton-Jacobi-Issacs (HJI) equation~\cite{aliyu2011nonlinear}, $r_0^*$ and $V_r^*$ should be solved from
	\begin{equation}\label{nominal worst-case maximum}
		\begin{aligned}
			\max_{r\in\mathcal{R}}\left\{(\nabla V_r^*)^\top\left[\hat{f}(x,u_0^*)+r(x,u_0^*) \right] +l(x,u_0^*)\right\}=0.
		\end{aligned}
	\end{equation}
	Since the inner product $(\nabla V_r^*)^\top r$ is linear in $r$ and $\mathcal{R}$ is norm-bounded under Assumption~\ref{Assum: error bound}, the maximization is achieved when $r\in\mathcal{R}$ is aligned with $\nabla V_r^*$, yielding
	\begin{equation}\label{nominal worst-case error}
		r_0^*(x,u_0^*)=\dfrac{c_1\|x\|+c_2\|u_0^*\|}{\|\nabla V_r^*\|}\nabla V_r^*.
	\end{equation}
	With \eqref{nominal worst-case maximum} and \eqref{nominal worst-case error},  $V_r^*(x)=J(u_0^*,x,r_0^*)$ is the solution of
	\begin{equation}\label{actual HJI}
		\begin{aligned}
			(\nabla V_r^*)^\top\hat{f}(x,u_0^*)+l(x,u_0^*)=-(c_1\|x\|+c_2\|u_0^*\|) \|\nabla V_r^*\|.
		\end{aligned}
	\end{equation}
	Note that for the degenerate case in \eqref{nominal worst-case error} where $\|\nabla V_r^*\|=0$, we obtain $0=l(x,u_0^*)\geq \alpha_1\|x\|^2+\alpha_2\|u_0^*\|^2$ with Assumption~\ref{Assum: running cost}, which only occurs at the origin $(x,u_0^*)=(0,0)$. Thus, we can impose $r_0^*(0,0)=0$ to ensure that $r_0^*$ is continuous and well-defined in $\mathbb{X}\times\mathbb{U}$.
	Consequently, the time derivative of $V_r^*-V_0^*$ along \eqref{nominal worst trajectory u0} is
	\begin{equation*}
		\begin{aligned}
			\frac{\mathrm{d}}{\mathrm{d}t}(V_r^*-V_0^*)=&\nabla(V_r^*)^\top \left[\hat{f}(x(t),u_0^*(t))+r_0^*(x(t),u_0^*(t))\right]\\
			&-\nabla(V_0^*)^\top \hat{f}(x(t),u_0^*(t))-\nabla(V_0^*)^\top r_0^* \\
			=&-\nabla(V_0^*)^\top \dfrac{c_1\|x(t)\|+c_2\|u_0^*(t)\|}{\|\nabla V_r^*\|}\nabla V_r^*,
		\end{aligned}
	\end{equation*}
	which is obtained by combining \eqref{nominal HJB} and \eqref{actual HJI}, then
	\begin{equation*}
		\left\|\frac{\mathrm{d}}{\mathrm{d}t}(V_r^*-V_0^*)\right\|\leq\|\nabla V_0^*\|\left(c_1\|x(t)\| +c_2\|u_0^*(t)\|\right).
	\end{equation*}
	Since the closed-loop stability has been proven in Theorem~\ref{Thm: Robust Stability} and hence $x(\infty)=0$, $V_r^*$ and $V_0^*$ tend to be zero when $t\rightarrow\infty$. Taking the norm from both sides of
	\begin{equation*}
		V_r^*(x)-V_0^*(x)=-\int_{0}^\infty\frac{\mathrm{d}}{\mathrm{d}t}\left[V_r^*(x(t))-V_0^*(x(t))\right]\mathrm{d}t,
	\end{equation*}
	and noting that $V_r^*(x)-V_0^*(x)\geq 0$ holds natively due to the maximizing property of the worst-case approximation error $r_0^*$, we obtain
	\begin{equation}\label{combine 1}
		\begin{aligned}
			V_r^*-V_0^*\leq& \int_{0}^\infty \left\|\frac{\mathrm{d}}{\mathrm{d}t}(V_r^*-V_0^*)\right\|\mathrm{d}t \\
			\leq&\left(\int_{0}^\infty\|\nabla V_0^*\|^2\mathrm{d}t\right)^{ \frac{1}{2}}\left(\int_{0}^\infty(c_1\|x(t)\|+c_2\|u_0^*(t)\|)^2\mathrm{d}t\right)^{\frac{1}{2}}.
		\end{aligned}
	\end{equation}
	The last integral term corresponding to the approximation error can be bounded by
	\begin{equation}\label{combine 2}
		\begin{aligned}
			&\int_{0}^\infty\left(c_1\|x(t)\|+c_2\|u_0^*(t)\|\right)^2\mathrm{d}t
			\leq\int_{0}^\infty2\left(c_1^2\|x(t)\|^2+c_2^2\|u_0^*(t)\|^2\right)\mathrm{d}t\\
			\leq&\int_{0}^\infty\max\left\{\frac{2c_1^2}{\alpha_1},\frac{2c_2^2}{\alpha_2}\right\} \left(\alpha_1\|x(t)\|^2+\alpha_2\|u_0^*(t)\|^2\right) \mathrm{d}t\\
			\leq&\max\left\{\frac{2c_1^2}{\alpha_1},\frac{2c_2^2}{\alpha_2}\right\} \int_{0}^\infty l(x(t),u_0^*(t))\mathrm{d}t=C_{12}^2V_r^*(x).
		\end{aligned}
	\end{equation}
	Combining \eqref{combine 1} and \eqref{combine 2} induces
	\begin{equation}\label{optimal performance deviation version 2}
		\begin{aligned}
			V_r^*-V_0^*\leq C_{12}\left(\int_{0}^\infty\|\nabla V_0^*\|^2 \mathrm{d}t\right)^{\frac{1}{2}}(V_r^*)^\frac{1}{2},
		\end{aligned}
	\end{equation}
	which can be regarded as a quadratic inequality in the nonnegative variable $(V_r^*)^{\frac{1}{2}}$. Solving the corresponding quadratic equation and retaining its nonnegative root yields
	\begin{equation}\label{combine 3}
		\begin{aligned}
			(V_r^*)^\frac{1}{2}\leq \frac{1}{2}C_{12}\left(\int_{0}^\infty\|\nabla V_0^*\|^2 \mathrm{d}t\right)^{\frac{1}{2}} +\frac{1}{2}\sqrt{C_{12}^2\int_{0}^\infty\|\nabla V_0^*\|^2 \mathrm{d}t+4V_0^*}.
		\end{aligned}
	\end{equation}
	Since $r_0^*$ denotes the worst-case approximation error that tries to maximize the value function, $\|V-V_0^*\|\leq V_r^*-V_0^*$ holds, and the upper bound \eqref{optimal performance deviation} can be verified by calculating the square of both sides of \eqref{combine 3}. By symmetry, when the approximation error acts in the opposite direction, the resulting value function may also be lower than $V_0^*$. Under this circumstance, $r_0^{**}=\arg\min_{r\in\mathcal{R}} J(u_0^*,x,r)$ should be solved from the HJB equation
	\begin{equation*}
		\min_{r\in\mathcal{R}}\left\{(\nabla V_r^{**})^\top\left[\hat{f}(x,u_0^*)+r(x,u_0^*) \right]+l(x,u_0^*)\right\}=0.
	\end{equation*}
	The minimization of $(\nabla V_r^{**})^\top r$ linearly dependent on $r$ is also achieved when $r\in\mathcal{R}$ is aligned with $\nabla V_r^*$, yielding
	\begin{equation*}
		r_0^{**}=-\dfrac{c_1\|x\|+c_2\|u_0^*\|}{\|\nabla V_r^{**}\|}\nabla V_r^{**}.
	\end{equation*}
	The time derivative can be obtained
	\begin{equation*}
		\frac{\mathrm{d}}{\mathrm{d}t}(V_r^{**}-V_0^*)=\nabla(V_0^*)^\top \dfrac{c_1\|x\|+c_2\|u_0^*\|}{\|\nabla V_r ^{**}\|}\nabla V_r^{**},
	\end{equation*}
	and the same upper bound of $\|V_r^{**}-V_0^*\|$ can be derived with a similar way and leads to the same result with \eqref{optimal performance deviation}. The proof is completed.
\end{proof}

Theorem~\ref{Thm: performance deviation} introduces the gradient-energy integral as a key quantity governing the sensitivity of performance degradation. More precisely, this integral should be interpreted as an initial-state-dependent energy functional that measures the accumulated magnitude of the nominal value function gradient along the worst-case trajectory. Moreover, the following proposition shows that this quantity is automatically finite and admits an upper bound proportional to the nominal value function.
\begin{proposition}
	Suppose that Assumptions~\ref{Assum: running cost}-\ref{Assum: error bound} hold and the robust stability criterion~\eqref{robust stability criterion} is satisfied. Then, there exists a constant $\mu>0$, such that the gradient-energy integral term in \eqref{optimal performance deviation} is bounded by
	\begin{equation}
		\int_{0}^\infty\|\nabla V_0^*\|^2 \mathrm{d}t\leq \frac{L^2}{\mu}V_0^*(x).
	\end{equation}
\end{proposition}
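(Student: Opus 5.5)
We bound $\|\nabla V_0^*\|^2$ by $L^2\|x\|^2$ using Lemma~\ref{Lem: gradient of value function}. We then show that $V_0^*$ decays along the worst-case trajectory \eqref{nominal worst trajectory u0} at a rate dominated by $\|x\|^2$. Integrating this decay in time gives the claim.

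First, Lemma~\ref{Lem: gradient of value function} gives $\|\nabla V_0^*(x(t))\|^2\le L^2\|x(t)\|^2$ pointwise along the trajectory. It therefore suffices to show $\int_0^\infty\|x(t)\|^2\,\mathrm{d}t\le V_0^*(x)/\mu$. Second, the worst-case error $r_0^*$ in \eqref{nominal worst-case error} has norm exactly $c_1\|x\|+c_2\|u_0^*\|$, so $r_0^*\in\mathcal{R}$. The computation in the proof of Theorem~\ref{Thm: Robust Stability}, i.e.\ inequality \eqref{time derivative of V0}, therefore applies verbatim along \eqref{nominal worst trajectory u0}:
\begin{equation*}
\frac{\mathrm{d}V_0^*}{\mathrm{d}t}\le -\Big(\alpha_1-\big(c_1+\tfrac{1}{2}\beta c_2\big)L\Big)\|x(t)\|^2-\Big(\alpha_2-\tfrac{c_2L}{2\beta}\Big)\|u_0^*(t)\|^2 .
\end{equation*}
Under the robust stability criterion \eqref{robust stability criterion}, we fix a $\beta$ satisfying \eqref{beta condition}. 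Both coefficients are then strictly positive. We set $\mu:=\alpha_1-(c_1+\frac12\beta c_2)L>0$ and drop the nonpositive control term, which gives $\dot V_0^*\le-\mu\|x(t)\|^2$.

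Third, we integrate from $0$ to $T$ to get $\mu\int_0^T\|x\|^2\,\mathrm{d}t\le V_0^*(x)-V_0^*(x(T))\le V_0^*(x)$, using $V_0^*\ge0$. Letting $T\to\infty$ by monotone convergence yields $\int_0^\infty\|x\|^2\,\mathrm{d}t\le V_0^*(x)/\mu$. Combined with the first step, this gives $\int_0^\infty\|\nabla V_0^*\|^2\,\mathrm{d}t\le \frac{L^2}{\mu}V_0^*(x)$. Because we drop $V_0^*(x(T))$ by nonnegativity, the argument does not need convergence of the trajectory to the origin.

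The main technical point is to justify that the trajectory \eqref{nominal worst trajectory u0} is well defined and stays in $\mathbb{X}$, so that the Lipschitz bound and the differential inequality hold for all $t\ge0$. The worst-case field $r_0^*$ may be only continuous at the origin, which is where the convention $r_0^*(0,0)=0$ from the proof of Theorem~\ref{Thm: performance deviation} is used. This follows from the standing forward-invariance assumption on $\mathbb{X}$, together with the fact that $V_0^*$ is nonincreasing along solutions, so sublevel sets are invariant. I would note this explicitly rather than reprove existence.
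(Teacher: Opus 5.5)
Your proof is correct and follows the paper's own argument: choose $\beta$ from the robust stability criterion, set $\mu=\alpha_1-(c_1+\frac{1}{2}\beta c_2)L$, get $\dot V_0^*\le-\mu\|x\|^2$ along the closed loop with an admissible error (you rightly check that $r_0^*\in\mathcal{R}$), integrate in time, and finish with Lemma~\ref{Lem: gradient of value function}. The one difference is small but an improvement: the paper integrates directly over $[0,\infty)$ using $x(t)\to0$ from Theorem~\ref{Thm: Robust Stability}, whereas you integrate over $[0,T]$ and drop $V_0^*(x(T))\ge0$, so your bound does not rely on asymptotic convergence.
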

\begin{proof}
	As revealed by the proof of Theorem~\ref{Thm: Robust Stability}, since the robust stability criterion~\eqref{robust stability criterion} is satisfied, there exists a constant $\beta>0$ such that \eqref{time derivative of V0} holds and meanwhile $\left(c_1+\frac{1}{2}\beta c_2\right)L-\alpha_1<0$, $\frac{1}{2\beta}c_2L-\alpha_2<0$. Consider $\mu=\alpha_1-\left(c_1+\frac{1}{2}\beta c_2\right)L$. Then, along the closed-loop system trajectory \eqref{actual model} controlled by $u_0^*$ for any $r(x,u)\in\mathcal{R}$, the time derivative of the nominal optimal value function satisfies
	\begin{equation}
		\frac{\mathrm{d}V_0^*(x(t))}{\mathrm{d}t}\le-\mu\|x(t)\|^2.
	\end{equation}
	Integrating both sides over $[0,\infty)$ gives
	\begin{equation}
		-V_0^*(x)=\int_0^\infty\frac{\mathrm{d}V_0^*(x(t))}{\mathrm{d}t}\mathrm{d}t\le-\mu\int_0^\infty\|x(t)\|^2dt,
	\end{equation}
	since $\lim_{t\to\infty}x(t)=0$ is guaranteed by Theorem~\ref{Thm: Robust Stability}. Furthermore, by Lemma 1, we obtain
	\begin{equation}
		\int_0^\infty\|\nabla V_0^*(x(t))\|^2dt\le L^2\int_0^\infty\|x(t)\|^2dt\le \frac{L^2}{\mu}V_0^*(x).
	\end{equation}
	The proof is completed.
\end{proof}

\subsection{Deviation of controller}\label{4.2 controller deviation}
The analysis in \Cref{4.1 performance deviation} has established that the performance degradation can be quantified under general nonlinear dynamics and cost functional relying only on Assumptions~\ref{Assum: running cost}, \ref{Assum: value functions} and \ref{Assum: error bound}, which highlights the general applicability of the theoretical results. By contrast, quantifying the explicit deviation in the resulting controller is more involved.

In general, the optimal controller is implicitly defined through the Hamiltonian minimization in the HJB equation, which couples the controller to value function in a way that prevents a direct derivation of optimality deviation bound. To enable an explicit expression for the optimal controller and facilitate the associated deviation analysis, we introduce an additional structural assumption.
\begin{assumption}\label{Assum: control-affine}
	The nominal surrogate dynamics \eqref{surrogate model} is control-affine, i.e.,
	\begin{equation}
		\dot{x}(t)=\hat{f}(x(t),u(t))=f_0(x(t))+g_0(x(t))u(t).
	\end{equation}
	Furthermore, the running cost is of the form
	\begin{equation}\label{running cost control-affine}
		l(x,u)=q(x)+\frac{1}{2}u^\top Ru,\ R\succ0.
	\end{equation}
\end{assumption}

Assumption~\ref{Assum: control-affine} is only imposed for the analysis of optimal controller deviation in this subsection and does not affect the validity of theoretical results for performance deviation in \Cref{4.1 performance deviation}. It is worth emphasizing that the control-affine structure is imposed on the nominal system \eqref{surrogate model} rather than the actual dynamics \eqref{original system}. This reflects a common practice in surrogate modeling, where a structured model class is adopted to facilitate controller synthesis, while the actual system may exhibit more complicated nonlinear dynamics. The resulting model mismatch is captured by the approximation error term $r(x,u)$, explicitly accounted for in the robustness analysis.

We are now ready to quantify the optimality deviation in the optimal controller with the following theorem.
\begin{theorem}\label{Thm: controller deviation}
	Due to the existence of model mismatch, the nominal optimal controller $u_0^*$ given by \eqref{nominal HJB} deviates from the actual optimal controller $u^*$ corresponding to the original nonlinear system \eqref{original system}. Under Assumptions~\ref{Assum: running cost}-\ref{Assum: control-affine}, this deviation is characterized by
	\begin{equation}\label{controller deviation}
		\begin{aligned}
			\int_{0}^{\infty}(u_0^*-u^*)^\top R(u_0^*-u^*) \mathrm{d}t 
			\leq 2\Delta V_{\max}+2C_{12}\left[(V_0^*+\Delta V_{\max})\int_{0}^\infty\|\nabla V_0^*\|^2\mathrm{d}t\right]^\frac{1}{2},
		\end{aligned}
	\end{equation}
	where the integral term of $\|\nabla V_0^*\|^2$ here is evaluated along the actual optimal trajectory, i.e., \eqref{original system} controlled by $u^*$, and $C_{12},\Delta V_{\max}$ are given by Theorem~\ref{Thm: performance deviation}.
\end{theorem}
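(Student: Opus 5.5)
The plan is to exploit the control-affine structure of Assumption~\ref{Assum: control-affine}. Under it, the nominal Hamiltonian can be written as an exact quadratic in $u$ centred at $u_0^*$. Then $V_0^*$ is differentiated along the \emph{actual optimal} trajectory, i.e.\ \eqref{original system} written as \eqref{actual model} and driven by $u^*$. The bound should come out of integrating that derivative.

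\textbf{Step 1 (completing the square).} With $\hat f=f_0+g_0u$ and $l=q+\frac12u^\top Ru$, the minimizer in \eqref{nominal HJB} is $u_0^*=-R^{-1}g_0^\top\nabla V_0^*$. For any $u$ one has the identity
\begin{equation*}
(\nabla V_0^*)^\top\bigl(f_0+g_0u\bigr)+l(x,u)=\tfrac12(u-u_0^*)^\top R(u-u_0^*),
\end{equation*}
because \eqref{nominal HJB} says the left-hand side vanishes at $u=u_0^*$ and is exactly quadratic in $u$ with Hessian $R$.

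\textbf{Step 2 (integration along the actual optimal trajectory).} Along $\dot x=f_0+g_0u^*+r(x,u^*)$ the identity gives
\begin{equation*}
\frac{\mathrm d V_0^*}{\mathrm dt}=-l(x,u^*)+\tfrac12(u^*-u_0^*)^\top R(u^*-u_0^*)+(\nabla V_0^*)^\top r(x,u^*).
\end{equation*}
By Corollary~\ref{Coro: V^*}, $x(t)\to0$, so $V_0^*(x(t))\to0$. Integrating over $[0,\infty)$ and using $\int_0^\infty l(x,u^*)\,\mathrm dt=V^*(x)$ then yields
\begin{equation*}
\tfrac12\int_0^\infty(u_0^*-u^*)^\top R(u_0^*-u^*)\,\mathrm dt=V^*-V_0^*-\int_0^\infty(\nabla V_0^*)^\top r\,\mathrm dt .
\end{equation*}

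\textbf{Step 3 (bounding the two terms).}

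For the first term, optimality of $u^*$ for the actual system gives $V^*\le V=J(u_0^*,x,r)$. Theorem~\ref{Thm: performance deviation} then gives $V^*-V_0^*\le V-V_0^*\le\Delta V_{\max}$. Only this one-sided bound is needed, since the term enters with a plus sign.

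For the cross term, use Assumption~\ref{Assum: error bound} and Cauchy--Schwarz:
\begin{equation*}
\Bigl|\int_0^\infty(\nabla V_0^*)^\top r\,\mathrm dt\Bigr|\le\Bigl(\int_0^\infty\|\nabla V_0^*\|^2\mathrm dt\Bigr)^{1/2}\Bigl(\int_0^\infty(c_1\|x\|+c_2\|u^*\|)^2\mathrm dt\Bigr)^{1/2}.
\end{equation*}
The chain in \eqref{combine 2} applies verbatim with $u^*$ in place of $u_0^*$. It bounds the second factor squared by $C_{12}^2V^*\le C_{12}^2(V_0^*+\Delta V_{\max})$.

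Multiplying the resulting inequality by $2$ gives exactly \eqref{controller deviation}, with the gradient-energy integral evaluated along the actual optimal trajectory.

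\textbf{Main obstacle.} The delicate point is justifying Step 2 rigorously. The identity of Step 1 must hold pointwise even though $u^*$ is produced by the true dynamics rather than by the nominal model. The integrals must be finite, which follows from $V^*<\infty$ together with the coercivity in Assumption~\ref{Assum: running cost}. The boundary term at $t=\infty$ must vanish, which uses Corollary~\ref{Coro: V^*}. I would also check that $V^*\le V$ is legitimate, i.e.\ that $u_0^*$ is admissible for the actual system; Theorem~\ref{Thm: Robust Stability} guarantees this, so it is implicitly assumed here as in Theorem~\ref{Thm: performance deviation}.
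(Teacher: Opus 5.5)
Your proof is correct and follows the paper's argument. You obtain the same integrated identity $\int_0^\infty(u_0^*-u^*)^\top R(u_0^*-u^*)\,\mathrm{d}t=2(V^*-V_0^*)-2\int_0^\infty(\nabla V_0^*)^\top r\,\mathrm{d}t$ along the actual optimal trajectory, then bound it with $V^*\le V$ plus Theorem~\ref{Thm: performance deviation} and the Cauchy--Schwarz/\eqref{combine 2} chain with $u^*$ in place of $u_0^*$, exactly as the paper does. The one difference is how you reach the pointwise identity \eqref{controller deviation original formula}. The paper first derives the explicit actual controller $u^*=-R^{-1}(g_0+\partial r/\partial u)^\top\nabla V^*$ and subtracts the expanded HJB equations, whereas your completing-the-square in the nominal Hamiltonian needs only $V^*=J(u^*,x,r)$, so it avoids the first-order stationarity condition for the actual problem.
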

\begin{proof}
	With Assumption~\ref{Assum: control-affine}, the actual value function $V^*$ solved from the HJB equation \eqref{actual HJB} satisfies
	\begin{equation*}
		\begin{aligned}
			\min_{u}\left\{(\nabla V^*)^\top[f_0(x)+g_0(x)u+r(x,u)]+l(x,u)\right\}=0,
		\end{aligned}
	\end{equation*}
	where the running cost $l(x,u)$ is given by \eqref{running cost control-affine}. The partial derivative of $r(x,u)=f(x,u)-f_0(x)-g_0(x)u$ with respect to $u$ is $\mathcal{C}^1$ since $\frac{\partial r}{\partial u}=\frac{\partial f}{\partial u}-g_0(x)$. Denote $r_{u^*}=\frac{\partial r}{\partial u}(x,u^*)$. Accordingly, the actual optimal controller satisfies
	\begin{equation}\label{actual optimal control}
		u^*(x)=-R^{-1}\left(g_0(x)+\frac{\partial r}{\partial u}(x,u^*)\right)^\top\nabla V^*.
	\end{equation}
	With \eqref{actual HJB} and \eqref{actual optimal control}, $V^*(x)$ is the solution of
	\begin{equation}\label{actual HJB control-affine}
		\begin{aligned}
			(\nabla V^*)^\top f_0(x)+q(x)-\frac{1}{2}(\nabla V^*)^\top g_0(x)R^{-1}g_0^\top(x) \nabla V^* \\ +(\nabla V^*)^\top r(x,u^*)+\frac{1}{2}(\nabla V^*)^\top r_{u^*}R^{-1}r_{u^*}^\top\nabla V^*=0.
		\end{aligned}
	\end{equation}
	For the nominal case where $r(x,u)=0$, the HJB equation~\eqref{actual HJB control-affine} degenerates to 
	\begin{equation}\label{nominal HJB control-affine}
		(\nabla V_0^*)^\top f_0(x)+q(x)-\frac{1}{2}(\nabla V_0^*)^\top g_0(x)R^{-1}g_0^\top(x) \nabla V_0^*=0,
	\end{equation}
	which is equivalent to \eqref{nominal HJB} for control-affine case and induces the nominal optimal controller $u_0^*=-R^{-1}g_0^\top(x)\nabla V_0^*$. Then,
	\begin{equation*}
		\begin{aligned}
			(u_0^*-u^*)^\top R(u_0^*-u^*)=&(\nabla V^*)^\top r_{u^*}R^{-1}r_{u^*}^\top\nabla V^*
			+2\nabla (V^*-V_0^*)^\top g_0(x)R^{-1}r_{u^*}^\top\nabla V^*\\
			&+\nabla (V^*-V_0^*)^\top g_0(x)R^{-1}g_0^\top(x)\nabla (V^*-V_0^*)
		\end{aligned}
	\end{equation*}
	holds. With \eqref{actual HJB control-affine} and \eqref{nominal HJB control-affine}, we obtain
	\begin{equation}\label{controller deviation original formula}
		\begin{aligned}
			(u_0^*-u^*)^\top R(u_0^*-u^*)=&-2(\nabla V_0^*)^\top r(x,u^*) \\
			&-2\nabla (V^*-V_0^*)^\top\left[f_0(x)+g_0(x)u^*+r(x,u^*)\right].
		\end{aligned}
	\end{equation}
	Based on Theorem~\ref{Thm: Robust Stability} and Corollary~\ref{Coro: V^*} that confirms the closed-loop robust stability, $V^*(x(\infty))=V_0^*(x(\infty))=0$ holds. The last term in \eqref{controller deviation original formula} is the time derivative of $-(V^*-V_0^*)$ along the trajectory \eqref{original system} controlled by the actual optimal controller $u^*$. By integrating along this trajectory, the optimality deviation characterized by the nominal and actual optimal controllers satisfies 
	\begin{equation}\label{equality for controller deviation}
		\begin{aligned}
			&\int_{0}^{\infty}(u_0^*-u^*)^\top R(u_0^*-u^*) \mathrm{d}t \\
			=&-2\int_{0}^{\infty}\left[\frac{\mathrm{d}}{\mathrm{d}t}(V^*-V_0^*)+(\nabla V_0^*)^\top r\left(x(t),u^*\left(t\right)\right)\right]\mathrm{d}t \\
			=&2\left(V^*(x)-V_0^*(x)\right)-2\int_{0}^{\infty}(\nabla V_0^*)^\top r\left(x(t),u^*\left(t\right)\right)\mathrm{d}t.
		\end{aligned}
	\end{equation}
	Following similar procedures to \eqref{combine 1} and \eqref{combine 2}, we obtain
	\begin{equation}\label{inequality for controller deviation}
		\begin{aligned}
			&\left\|\int_{0}^{\infty}(\nabla V_0^*)^\top r\left(x(t),u^*\left(t\right)\right)\mathrm{d}t\right\| \\
			\leq &\left(\int_{0}^\infty\|\nabla V_0^*\|^2\mathrm{d}t\right)^{\frac{1}{2}} \left(\int_{0}^\infty(c_1\|x\|+c_2\|u^*\|)^2\mathrm{d}t\right)^{\frac{1}{2}}
			\leq C_{12}\left(V^*\int_{0}^\infty\|\nabla V_0^*\|^2\mathrm{d}t\right)^{\frac{1}{2}}.
		\end{aligned}
	\end{equation}
	With the notation $J(u,z,r)$, we obtain the relations
	\begin{equation}\label{value function relations}
		\begin{aligned}
			V_r^*(x)=J(u_0^*,x,r_0^*)&\geq V(x)=J(u_0^*,x,r), \\
			V_r^*(x)=J(u_0^*,x,r_0^*)&\geq V_0^*(x)=J(u_0^*,x,0), \\
			V(x)=J(u_0^*,x,r) &\geq V^*(x)=J(u^*,x,r),
		\end{aligned}
	\end{equation}
	which directly follow from the definitions of $V,V_0^*,V_r^*$ and the optimality properties of $u_0^*$ and $r_0^*$. Consequently, we have
	\begin{equation*}
		V^*-V_0^*=V^*-V+V-V_0^*\leq V-V_0^*\leq\Delta V_{\max}.
	\end{equation*}
	Then inequalities \eqref{inequality for controller deviation} can be further processed, i.e.,
	\begin{equation*}
		\begin{aligned}
			&\left\|\int_{0}^{\infty}(\nabla V_0^*)^\top r\left(x(t),u^*\left(t\right)\right)\mathrm{d}t\right\| 
			\leq C_{12}\left[(V_0^*+\Delta V_{\max})\int_{0}^\infty\|\nabla V_0^*\|^2\mathrm{d}t\right]^{\frac{1}{2}}.
		\end{aligned}
	\end{equation*}
	Along with \eqref{equality for controller deviation}, the optimality deviation of controller satisfies \eqref{controller deviation}. The proof is completed.
\end{proof}

\begin{remark}[An underlying analytical strategy]
	The proof of Theorem~\ref{Thm: controller deviation} also confirms that the optimality deviation between the nominal and actual optimal value functions is bounded by $V^*-V_0^*\leq \Delta V_{\max}$. Directly connecting $V_0^*(x)$ and $V^*(x)$, $u_0^*$ and $u^*$ is relatively challenging due to their implicit coupling through distinct HJB equation. Hence, the intermediate worst-case value function $V_r^*(z)$ serves as a pivotal bridge, which captures the maximal deviation induced by admissible model mismatch and allows a tractable analysis of $V^*-V_0^*$. In this sense, $V_r^*$ is not an auxiliary construct but a conceptually necessary element that supports the entire optimality deviation analysis.
\end{remark}

\section{Exemplification through the LQR lens}\label{5.linear}
To further elucidate the proposed robustness analysis, we consider its specialization to the classical LQR problem. Beyond providing a concrete and analytically transparent example, the LQR setting offers a valuable perspective from which broader implications of the proposed analysis can be more clearly understood. As will be shown, the exemplification establishes an unexpected connection between nonlinear optimality degradation and Riccati perturbation analysis \cite{AREbook1995}, revealing that the two perspectives are linked through closely related underlying structures.
\subsection{From nonlinear robustness to LQR stability}
Consider the optimal control problem of a linear system
\begin{equation}\label{actual linear system}
	\dot{x}(t)=Ax(t)+Bu(t),
\end{equation}
with the quadratic cost functional \eqref{optimal control problem} where
\begin{equation}\label{LQR: running cost}
	l(x(t),u(t))=\frac{1}{2}x^\top(t)Qx(t)+\frac{1}{2}u^\top(t)Ru(t).
\end{equation}
Suppose that the nominal surrogate model \eqref{surrogate model} is given by
\begin{equation}\label{nominal linear system}
	\dot{x}(t)=\hat{A}x(t)+\hat{B}u(t).
\end{equation}
Then, the resulting approximation error can be expressed by
\begin{equation}
	r(x(t),u(t))=\Delta Ax(t)+\Delta Bu(t),
\end{equation}
where $\Delta A=A-\hat{A},\Delta B=B-\hat{B}$. It is well-known that under such settings with $Q,R\succ 0$ and $(\hat{A},\hat{B})$ being stabilizable, the nominal optimal controller is given by $u_0^*(t)=-R^{-1}\hat{B}^\top P_0x(t)$ and the nominal optimal value function is $V_0^*(x)=\frac{1}{2}x^\top P_0x$, where $P_0$ is the unique positive definite solution of algebraic Riccati equation (ARE)
\begin{equation}\label{LQR: nominal ARE}
	P_0\hat{A}+\hat{A}^\top P_0+Q-P_0\hat{B}R^{-1}\hat{B}^\top P_0=0.
\end{equation}
Assumptions~\ref{Assum: running cost}, \ref{Assum: value functions}, \ref{Assum: error bound} and \ref{Assum: control-affine} naturally hold with $\alpha_1=\frac{1}{2}\lambda_{\min}(Q)$, $\alpha_2=\frac{1}{2}\lambda_{\min}(R)$, $c_1=\|\Delta A\|$ and $c_2=\|\Delta B\|$. In the presence of model mismatch, our objective is not to revisit the classical LQR problem, but to investigate how the optimality deviation analysis developed in Section~\ref{4.optimality} manifests itself in the classical setting.

Specializing the analysis in Theorem~\ref{Thm: Robust Stability} to the LQR setting directly yields the following result.
\begin{corollary}\label{Coro: LQR robust stability}
	Suppose that $(\hat{A},\hat{B})$ is stabilizable, $Q,R\succ0$. Then, the nominal optimal controller $u_0^*=-R^{-1}\hat{B}^\top P_0$ in the LQR setting stabilizes the actual linear system \eqref{actual linear system} if
	\begin{equation}\label{LQR: robust stability criterion}
		\frac{\|P_0\|}{\lambda_{\min}(Q)}\left[2\|\Delta A\|+\frac{\|P_0\|}{ \lambda_{\min}(R)}\|\Delta B\|^2\right]<1.
	\end{equation}
\end{corollary}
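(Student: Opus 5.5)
The plan is to specialize Theorem~\ref{Thm: Robust Stability} to the LQR data and read off \eqref{LQR: robust stability criterion} as the instance of \eqref{robust stability criterion}.

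\textbf{Step 1: the constants.} With $l$ given by \eqref{LQR: running cost}, we have $\frac{1}{2}x^\top Qx\ge \frac{1}{2}\lambda_{\min}(Q)\|x\|^2$ and the analogous bound for $R$. Hence \eqref{running cost lower bound} holds with $\alpha_1=\frac12\lambda_{\min}(Q)$ and $\alpha_2=\frac12\lambda_{\min}(R)$. The error $r=\Delta Ax+\Delta Bu$ satisfies $\|r\|\le\|\Delta A\|\|x\|+\|\Delta B\|\|u\|$, so Assumption~\ref{Assum: error bound} holds with $c_1=\|\Delta A\|$ and $c_2=\|\Delta B\|$.

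\textbf{Step 2: Assumption~\ref{Assum: value functions} and the constant $L$.} Stabilizability of $(\hat A,\hat B)$ and $Q\succ0$ give a unique $P_0\succ0$ solving \eqref{LQR: nominal ARE}, with $V_0^*=\frac12x^\top P_0x$ smooth and $u_0^*$ the unique minimizer. The gradient is $\nabla V_0^*=P_0x$, so $\|\nabla V_0^*\|\le\|P_0\|\|x\|$ and Lemma~\ref{Lem: gradient of value function} holds with $L=\|P_0\|$. This is also the Hessian bound used in that lemma.

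\textbf{Step 3: substitution.} Inserting these constants into \eqref{robust stability criterion} gives
\[
\|\Delta A\|\|P_0\|+\frac{\|\Delta B\|^2\|P_0\|^2}{2\lambda_{\min}(R)}<\frac12\lambda_{\min}(Q).
\]
Multiplying by $2/\lambda_{\min}(Q)$ yields exactly \eqref{LQR: robust stability criterion}. Theorem~\ref{Thm: Robust Stability} then gives asymptotic stability of the closed loop $\dot x=(A-BR^{-1}\hat B^\top P_0)x$.

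\textbf{Main obstacle: the compact-domain setting.} Theorem~\ref{Thm: Robust Stability} is stated on a compact forward-invariant set $\mathbb{X}$, whereas the linear problem lives on $\mathbb{R}^n$. In the quadratic case this causes no difficulty. The bound $\dot V_0^*\le-\mu\|x\|^2$ with $\mu>0$ holds globally, because every estimate in that proof is homogeneous of degree two and uses only global constants. Moreover, the sublevel sets of $V_0^*$ are compact and invariant. Together these give global exponential stability. Equivalently, the bound says that $A_{cl}^\top P_0+P_0A_{cl}\prec0$, so the closed-loop matrix $A_{cl}=A-BR^{-1}\hat B^\top P_0$ is Hurwitz.
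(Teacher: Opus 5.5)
Your proposal is correct and matches the paper's proof. Like the paper, you identify $\alpha_1=\frac12\lambda_{\min}(Q)$, $\alpha_2=\frac12\lambda_{\min}(R)$, $c_1=\|\Delta A\|$ and $c_2=\|\Delta B\|$, take $L=\|P_0\|$ from $\nabla V_0^*=P_0x$, substitute into \eqref{robust stability criterion}, and invoke Theorem~\ref{Thm: Robust Stability}. Your extra remark is a valid refinement that the paper leaves implicit: every estimate is global and quadratic, so the compact-domain hypothesis costs nothing, and the bound amounts to $(A-BR^{-1}\hat{B}^\top P_0)^\top P_0+P_0(A-BR^{-1}\hat{B}^\top P_0)\prec 0$, which makes the closed-loop matrix Hurwitz.
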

\begin{proof}
	Since the nominal optimal value function satisfies $V_0^*=\frac{1}{2}x^\top P_0 x$, the Lipschitz constant in Lemma~\ref{Lem: gradient of value function} can be selected as $L=\|P_0\|$. Substituting the quantities $c_1,c_2,\alpha_1, \alpha_2$ with $\|\Delta A\|,\|\Delta B\|,\frac{1}{2}\lambda_{\min}(Q), \frac{1}{2}\lambda_{\min}(R)$ respectively in \eqref{robust stability criterion} yields the condition \eqref{LQR: robust stability criterion} for closed-loop robust stability. The desired result follows immediately from Theorem~\ref{Thm: Robust Stability}.
\end{proof}

\Cref{Coro: LQR robust stability} provides an explicit robust stability criterion for perturbed LQR closed-loop systems, whose structure closely parallels classical Lyapunov-based robustness conditions in existing literature. Taking \cite{Patel1977robustness} for example, the condition for closed-loop stability under linear perturbations is provided by
\begin{equation}\label{LQR: existing robust stability}
	\|\Delta A\|+\|\Delta B\|\|K_0\|\leq \frac{\lambda_{\min}(Q+P_0\hat{B}R^{-1} \hat{B}^\top P_0)}{2\|P_0\|},
\end{equation}
where $K_0=-R^{-1}\hat{B}^\top P_0$. To make the intrinsic consistency of \eqref{LQR: existing robust stability} and Corollary~\ref{Coro: LQR robust stability} more transparent, we illustrate with two special cases.

\textit{Case 1:} $\|\Delta B\|=0$. Under this circumstance, conditions \eqref{LQR: robust stability criterion} and \eqref{LQR: existing robust stability} degenerate to $\|\Delta A\|< \frac{\lambda_{\min}(Q)}{2\|P_0\|}$ and $\|\Delta A\|\leq \frac{\lambda_{\min}(Q+P_0\hat{B}R^{-1} \hat{B}^\top P_0)}{2\|P_0\|}$ respectively. Note that as $K_0^\top RK_0=P_0\hat{B}R^{-1} \hat{B}^\top P_0$, $\lambda_{\min}(Q) \leq\lambda_{\min}(Q+K_0^\top RK_0)$ holds. The slight conservatism in the derived bound represents an inevitable technical price for deploying a unified and general nonlinear robustness analysis framework. Nevertheless, both boundaries match identically in terms of algebraic scaling and order of magnitude.

\textit{Case 2:} $\|\Delta A\|=0$. Under this circumstance, the condition \eqref{LQR: robust stability criterion} degenerates to $\|\Delta B\|<\frac{\sqrt{ \lambda_{\min}(Q)\lambda_{\min}(R)}}{\|P_0\|}$, which satisfies
\begin{subequations}
	\begin{align}
		\|\Delta B\|&<\frac{1}{\|P_0\|}\sqrt{\lambda_{\min}(Q)\lambda_{\min}(R)}
		\leq \frac{1}{\|P_0\|}\left[\frac{\lambda_{\min}(Q)}{2\|K_0\|}+ \frac{\lambda_{\min}(R)}{2\|K_0\|}\|K_0\|^2\right] \\
		&\leq \frac{\lambda_{\min}(Q)+\lambda_{\max}(K_0^\top RK_0)}{2\|P_0\|\|K_0\| }. \label{LQR: connection for robust stability}
	\end{align}
\end{subequations}
Meanwhile, the condition \eqref{LQR: existing robust stability} degenerates to $\|\Delta B\|\leq \frac{\lambda_{\min}(Q+K_0^\top RK_0)}{2\|P_0\|\|K_0\|}$ when $\|\Delta A\|=0$, which also satisfies \eqref{LQR: connection for robust stability}. Therefore, both conditions admit the same upper bound in terms of the Riccati solution and the nominal feedback gain, demonstrating that the proposed criterion preserves the essential robustness structure of existing LQR analyses even under isolated input perturbations.

Together, the above two limiting cases demonstrate that the proposed nonlinear robust stability criterion admits a natural specialization to the classical LQR setting, and remains structurally consistent with existing robustness conditions despite being derived from a unified nonlinear analysis framework.

\subsection{From performance deviation to Riccati perturbation}
To specialize the performance deviation analysis to the LQR setting, the central difficulty lies in characterizing the gradient-energy integral term $\int_{0}^\infty\|\nabla V_0^*\|^2\mathrm{d}t$ evaluated along the worst-case trajectory \eqref{nominal worst trajectory u0} appearing in Theorem~\ref{Thm: performance deviation}. By leveraging the quadratic structure of the nominal value function, this quantity admits a Lyapunov characterization, which finally results in a perturbation bound closely related to classical ARE sensitivity analysis.
\begin{theorem}\label{Thm: LQR perturbation}
	Suppose that $(\hat{A},\hat{B})$ is stabilizable, $Q,R\succ0$ and the condition \eqref{LQR: robust stability criterion} is satisfied. Then, the actual optimal value function corresponding to the system \eqref{actual linear system} is given by $V^*(x)=\frac{1}{2}x^\top Px$, where $P$ is the unique and positive definite solution of ARE
	\begin{equation}\label{LQR: actual ARE}
		PA+A^\top P+Q-PBR^{-1}B^\top P=0.
	\end{equation}
	Further, the matrix perturbation deviation satisfies $\forall\varepsilon>0$,
	\begin{equation}\label{LQR: perturbation bound}
		P-P_0\leq C_{12}\left[\frac{1}{2\varepsilon(1-\beta)}S+2\varepsilon P_0\right]+\frac{C_{12}^2}{(1-\beta)}S +\frac{\varepsilon C_{12}^3}{2(1-\beta)}S,
	\end{equation}
	where $\beta=\frac{2\left(c_1+c_2\|K_0\|\right) \lambda_{\max}(S)}{\lambda_{\min}(P_0^2)}$, $K_0=-R^{-1}\hat{B}^\top P_0$, $\hat{A}_c=\hat{A}+\hat{B}K_0$, and $S$ is the unique solution of Lyapunov equation
	\begin{equation}\label{LQR: Lyapunov equation}
		\hat{A}_c^\top S+S\hat{A}_c+P_0^2=0.
	\end{equation}
\end{theorem}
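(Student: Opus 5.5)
The plan is to specialize Theorem~\ref{Thm: performance deviation}, in the form $V^*-V_0^*\leq\Delta V_{\max}$ obtained in the proof of Theorem~\ref{Thm: controller deviation}, to quadratic value functions, and then read off a Loewner-order inequality. The only nontrivial ingredient is a computable bound on the gradient-energy integral along the worst-case trajectory.

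\emph{Step 1: the actual ARE.} Under \eqref{LQR: robust stability criterion}, Corollary~\ref{Coro: LQR robust stability} shows that the static gain $K_0$ stabilizes $(A,B)$, so $(A,B)$ is stabilizable. Together with $Q,R\succ0$, the standard LQR theory then gives a unique positive definite solution $P$ of \eqref{LQR: actual ARE}, with $V^*(x)=\frac{1}{2}x^\top Px$ and $V_0^*(x)=\frac{1}{2}x^\top P_0x$. Hence
\[
V^*-V_0^*=\tfrac{1}{2}x^\top(P-P_0)x .
\]

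\emph{Step 2: the gradient-energy integral (main obstacle).} Since $\nabla V_0^*=P_0x$, the integral is $G(x):=\int_0^\infty x(t)^\top P_0^2x(t)\,\mathrm{d}t$. It is evaluated along \eqref{nominal worst trajectory u0}, which reads $\dot x=\hat{A}_cx+r_0^*$. Here $r_0^*$ is a nonlinear feedback satisfying
\[
\|r_0^*\|\leq c_1\|x\|+c_2\|K_0x\|\leq (c_1+c_2\|K_0\|)\|x\|,
\]
so $G$ cannot be computed in closed form. I would instead use $W(x)=x^\top Sx$ as a comparison function, with $S$ from \eqref{LQR: Lyapunov equation}; $S$ exists and is unique because $\hat{A}_c$ is Hurwitz. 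Along the worst-case trajectory,
\[
\dot W=-x^\top P_0^2x+2x^\top Sr_0^*\leq -x^\top P_0^2x+2(c_1+c_2\|K_0\|)\lambda_{\max}(S)\|x\|^2 .
\]
Using $\|x\|^2\leq x^\top P_0^2x/\lambda_{\min}(P_0^2)$, this becomes $\dot W\leq-(1-\beta)\,x^\top P_0^2x$. Integrating over $[0,\infty)$ and using $x(\infty)=0$ from Theorem~\ref{Thm: Robust Stability} yields
\[
G(x)\leq \frac{x^\top Sx}{1-\beta}.
\]
This step needs $\beta<1$, which I would state as implicit in the hypothesis (the bound is vacuous otherwise). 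Making this comparison rigorous despite the nonlinear, gradient-aligned worst-case error is the delicate part.

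\emph{Step 3: remove the square root and collect terms.} In $\Delta V_{\max}$ the product $\sqrt{G}\,\sqrt{C_{12}^2G+4V_0^*}$ is not quadratic in $x$. I would linearize it with Young's inequality, $ab\leq \frac{a^2}{2\varepsilon}+\frac{\varepsilon b^2}{2}$, taking $a=\sqrt{G}$ and $b=\sqrt{C_{12}^2G+2x^\top P_0x}$. This produces a term in $G/\varepsilon$, a term in $\varepsilon C_{12}^2G$, and a term in $\varepsilon\, x^\top P_0x$. Next I would substitute $G\leq x^\top Sx/(1-\beta)$ into these and into the term $\frac{1}{2}C_{12}^2G$, and multiply by $2$ to pass from $V^*-V_0^*$ to $x^\top(P-P_0)x$. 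The coefficients then regroup into $C_{12}\bigl[\frac{1}{2\varepsilon(1-\beta)}S+2\varepsilon P_0\bigr]$, $\frac{C_{12}^2}{1-\beta}S$ and $\frac{\varepsilon C_{12}^3}{2(1-\beta)}S$; I will track the numerical factors here, scaling $\varepsilon$ if needed to match the stated form.

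\emph{Step 4: pass to the matrix inequality.} The resulting bound $x^\top(P-P_0)x\leq x^\top M(\varepsilon)x$ holds for every $x\in\mathbb{X}$. Because $\mathbb{X}$ contains a neighbourhood of the origin and both sides are homogeneous quadratic forms, it extends to all $x\in\mathbb{R}^n$. Since $M(\varepsilon)$ is independent of $x$, this gives $P-P_0\preceq M(\varepsilon)$ for every $\varepsilon>0$, which is \eqref{LQR: perturbation bound}.
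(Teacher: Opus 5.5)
Your proposal is correct and follows essentially the same route as the paper. The steps match: stabilizability of $(A,B)$ via Corollary~\ref{Coro: LQR robust stability}, the comparison function $x^\top Sx$ along the worst-case trajectory giving $\int_0^\infty\|\nabla V_0^*\|^2\mathrm{d}t\le x^\top Sx/(1-\beta)$, Young's inequality on the square-root term of $\Delta V_{\max}$, and $V^*-V_0^*\le\Delta V_{\max}$. Because you use $4V_0^*=2x^\top P_0x$ exactly, you get $\varepsilon C_{12}P_0$ where the paper (which effectively uses $4x^\top P_0x$) has $2\varepsilon C_{12}P_0$, so the stated bound follows a fortiori without rescaling $\varepsilon$; also, your explicit requirement $\beta<1$ is needed by the paper's argument too, though it is never stated there.
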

\begin{proof}
	Under the condition \eqref{LQR: robust stability criterion}, the actual linear system $(A,B)$ is stabilizable according to Corollary~\ref{Coro: LQR robust stability}. Hence, the associated ARE \eqref{LQR: actual ARE} admits a unique positive definite solution \cite{anderson2007optimal}, i.e., $P\succ0$.
	
	As the nominal optimal value function admits a quadratic form $V_0^*(x)=\frac{1}{2}x^\top P_0x$, the gradient-energy term in \eqref{optimal performance deviation} is rewritten as $\int_{0}^\infty\|\nabla V_0^*\|^2\mathrm{d}t=\int_{0}^\infty x^\top(t)P_0^2x(t)\mathrm{d}t$. To relate this key quantity to a Lyapunov characterization of the nominal closed-loop system, we calculate the time derivative of $x^\top Sx$ along trajectory \eqref{nominal worst trajectory u0}. Since $(\hat{A},\hat{B})$ is stabilizable and $Q,R\succ0$, the nominal closed-loop matrix $\hat{A}_c=\hat{A}+\hat{B}K_0$ is Hurwitz, and thus Lyapunov equation \eqref{LQR: Lyapunov equation} admits a unique positive definite solution \(S\succ0\). Then, the time derivative of $x^\top Sx$ satisfies
	\begin{equation}\label{LQR: time derivative of xTSx}
		\begin{aligned}
			\frac{\mathrm{d}}{\mathrm{d}t}(x^\top Sx)&=2x^\top(t)S\left[\hat{A}_cx(t)+r_0^*(x(t), u_0^*(t))\right]\\&=x^\top(t)(S\hat{A}_c+\hat{A}_c^\top S)x(t)+2x^\top(t)Sr_0^*=-x^\top(t)P_0^2x(t)+2x^\top(t)Sr_0^*.
		\end{aligned}
	\end{equation}
	With \eqref{nominal worst-case error}, we obtain
	\begin{equation}
		\|r_0^*(x,u_0^*)\|=c_1\|x\|+c_2\|u_0^*\|\leq \left(c_1+c_2\left\|K_0\right\|\right)\|x\|.
	\end{equation}
	Combining this uncertainty bound with \eqref{LQR: time derivative of xTSx} yields
	\begin{equation*}
		\begin{aligned}
			\int_{0}^\infty x^\top(t)P_0^2x(t)\mathrm{d}t&=\int_{0}^\infty -\frac{\mathrm{d}}{\mathrm{d}t}(x^\top Sx)+2x^\top(t)Sr_0^*\mathrm{d}t\\
			&=x^\top Sx+\int_{0}^\infty2\lambda_{\max}(S)(c_1+c_2\|K_0\|)\|x(t)\|^2\mathrm{d}t\\
			&\leq x^\top Sx+\int_{0}^\infty \frac{2\lambda_{\max}(S)(c_1+c_2\|K_0\|)}{ \lambda_{\min}(P_0^2)}x^\top(t)P_0^2x(t)\mathrm{d}t.
		\end{aligned}
	\end{equation*}
	Rearranging the above inequality gives
	\begin{equation}\label{LQR: upper bound for integral term}
		\int_{0}^\infty\|\nabla V_0^*\|^2\mathrm{d}t=\int_{0}^\infty x^\top(t)P_0^2x(t)\mathrm{d}t \leq \frac{1}{1-\beta}x^\top Sx.
	\end{equation}
	The above quadratic characterization allows the performance deviation bound in Theorem~\ref{Thm: performance deviation} to be expressed entirely in terms of matrix quantities. Substituting \eqref{LQR: upper bound for integral term} into \eqref{optimal performance deviation} in Theorem~\ref{Thm: performance deviation} yields that $\forall\varepsilon>0$,
	\begin{equation*}
		\begin{aligned}
			\Delta V_{\max}&\leq\frac{C_{12}}{2}\left(\frac{1}{1-\beta} x^\top Sx\right)^{\frac{1}{2}}\sqrt{x^\top \left(\frac{C_{12}^2}{1-\beta}S +4P_0\right)x}+\frac{C_{12}^2}{2(1-\beta)}x^\top Sx\\
			&\leq \frac{C_{12}}{4\varepsilon}\frac{1}{1-\beta} x^\top Sx +\frac{\varepsilon C_{12}}{4}x^\top \left(\frac{C_{12}^2}{1-\beta}S +4P_0\right)x +\frac{C_{12}^2}{2(1-\beta)}x^\top Sx,
		\end{aligned}
	\end{equation*}
	where the Young's inequality is applied in the last step. With \eqref{value function relations}, we have
	\begin{equation}
		V^*-V_0^*=\frac{1}{2}x^\top(P-P_0)x\leq \Delta V_{\max},
	\end{equation}
	and the performance deviation characterization in \Cref{4.optimality} finally induces the perturbation bound \eqref{LQR: perturbation bound}. The proof is completed.
\end{proof}

The perturbation bound \eqref{LQR: perturbation bound} in Theorem~\ref{Thm: LQR perturbation} exhibits an intrinsic consistency with classical Riccati sensitivity results \cite{AREbook1995,konstantinov1993perturbation,kenney1988sensitivity}. In particular, focusing on the first-order asymptotic characteristics where $C_{12}=\max\left\{\sqrt{\frac{2}{\alpha_1}}\|\Delta A\|, \sqrt{\frac{2}{\alpha_2}}\|\Delta B\|\right\}\to0$, we can take the norm from both sides of \eqref{LQR: perturbation bound} and obtain
\begin{equation}
	\|P-P_0\|\leq C_{12}\left[\frac{1}{2\varepsilon(1-\beta)}\|S\|+2\varepsilon \|P_0\|\right] +O(C_{12}^2).
\end{equation}
Since the above inequality holds for all $\varepsilon>0$, we consider $\varepsilon^* =\sqrt{\|S\|/4\varepsilon(1-\beta)\|P_0\|}$ to minimize the first-order perturbation upper bound, yielding
\begin{equation}\label{LQR: first-order perturbation}
	\|P-P_0\|\leq 2C_{12}\sqrt{\frac{1}{(1-\beta)}\|S\|\|P_0\|}+O(C_{12}^2),
\end{equation}
which is structurally consistent with classical results via ARE sensitivity analysis \cite[Theorem~2.4]{kenney1988sensitivity}.

Despite this alignment in the LQR problem, the underlying viewpoints remain fundamentally different. Classical sensitivity methods typically initiate from explicit perturbations of the system matrices, and directly study the resulting variation of Riccati solution. In contrast, the proposed framework is rooted in the optimality deviation analysis for general nonlinear systems, and arrives at a Riccati perturbation characterization as a consequence. This observation reveals that although developed from different starting points, nonlinear optimality robustness and classical Riccati perturbation theory are connected through closely related analytical structures.

\subsection{Illustrative examples}
To verify the LQR-specialized analysis, we present a numerical case study as a benchmark. Consider the linear system \eqref{actual linear system} with
\begin{equation}
	A=\begin{bmatrix}
		0.8& -0.9\\ 1.0& 0.8
	\end{bmatrix}, B=\begin{bmatrix}
		1& 0\\0& 1
	\end{bmatrix}.
\end{equation}
The running cost $l(x,u)$ satisfies \eqref{LQR: running cost} where $Q=I_2, R=1$. To emulate an imperfect surrogate modeling, we construct a family of nominal models by introducing perturbations in both the state and input matrices, i.e.,
\begin{equation}
	\hat{A}=A+\delta\begin{pmatrix}
		0.2& -0.3\\ 0.1& 0.1
	\end{pmatrix},\hat{B}=B-\delta\begin{pmatrix}
		0.4& 0\\0& 0.2
	\end{pmatrix},
\end{equation}
which naturally satisfies Assumption~\ref{Assum: error bound}. By varying $\delta$, different levels of modeling mismatch can be systematically generated.
As $\delta$ varies from $0$ to $0.2$, the nominal system $(\hat{A},\hat{B})$ remains controllable and thus stabilizable, since the matrix $[\hat{B}\ \hat{A}\hat{B}]$ is full row rank for all considered values of $\delta$. Therefore, the corresponding Riccati solution $P_0(\delta)$, feedback gain $K_0(\delta)$, and auxiliary matrix $S(\delta)$ are well-defined and can be evaluated along the entire perturbation path.

The Riccati solution $P$ for the actual optimal value function $V^*$ is computed with the true linear system $(A,B)$. For each $\delta$, the resulting perturbation magnitude can then be quantified via $\|P-P_0\|$. Meanwhile, the theoretical upper bound provided by Theorem~\ref{Thm: LQR perturbation} is evaluated using the corresponding quantities $S$, $\beta$, and $C_{12}$.

The quantitative evaluation of the derived perturbation results is presented in \Cref{fig:LQR-comparison}. As the level of model mismatch $\delta$ increases, the nominal system progressively deviates from the actual dynamics, leading to a larger optimality deviation. As shown in \Cref{fig:LQR-comparison}, the actual performance deviation characterized by $\|P-P_0\|$ remains strictly enclosed by the theoretical upper bound~\eqref{LQR: perturbation bound} established in Theorem~\ref{Thm: LQR perturbation} throughout the entire uncertainty range. Additionally, the first-order perturbation characterization in \eqref{LQR: first-order perturbation} closely follows the evolution of actual deviation and provides a less conservative estimate of the Riccati sensitivity.
\begin{figure}[htbp]
	\centering
	\includegraphics[width=0.72\textwidth]{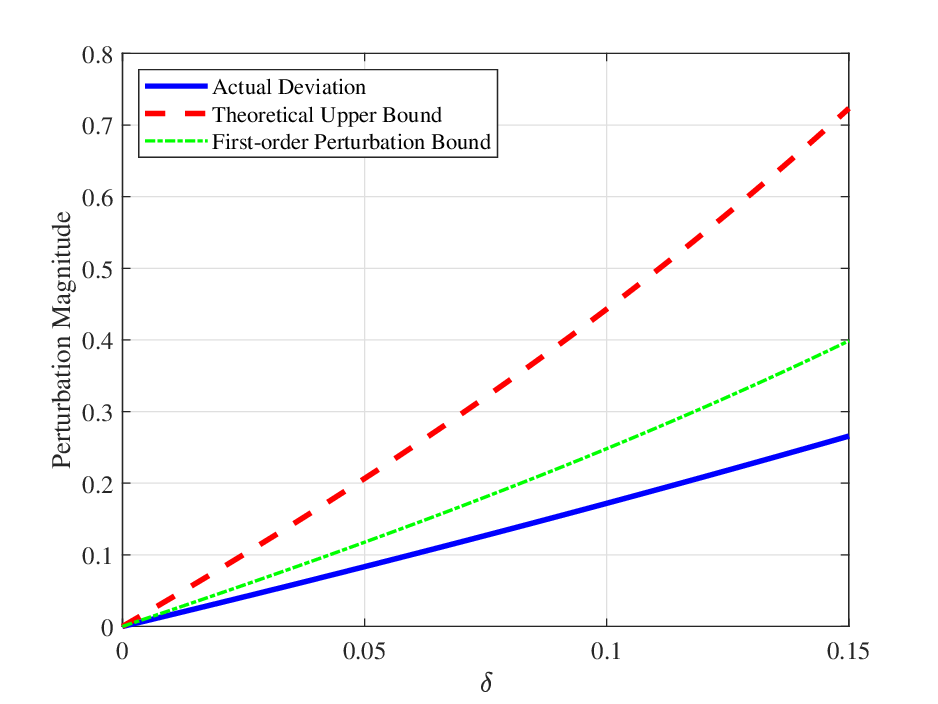}
	\caption{Numerical verification of \Cref{Thm: LQR perturbation}: Comparison among the actual deviation of Riccati solution $\|P-P_0\|$, the first-order perturbation bound \eqref{LQR: first-order perturbation}, and the theoretical upper bound \eqref{LQR: perturbation bound} as the model mismatch level $\delta$ increases.}
	\label{fig:LQR-comparison}
\end{figure}
\begin{figure}[htbp]
	\centering
	\includegraphics[width=0.72\textwidth]{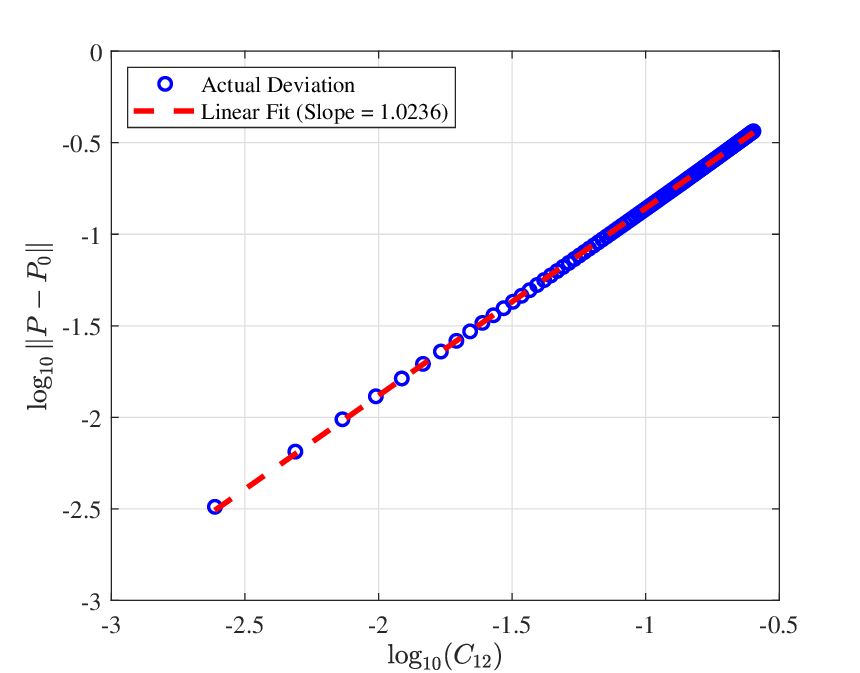}
	\caption{Logarithmic relationship between the actual Riccati perturbation $\|P-P_0\|$	and the uncertainty coefficient $C_{12}$. The fitted slope close to unity confirms the first-order perturbation law $\|P-P_0\|=O(C_{12})$.}
	\label{fig:LQR-first order}
\end{figure}

To examine the asymptotic behavior predicted by Theorem~\ref{Thm: LQR perturbation} in a more transparent way, \Cref{fig:LQR-first order} depicts the logarithmic relationship between $\|P-P_0\|$ and the uncertainty coefficient $C_{12}=\max\left\{\sqrt{\frac{2}{\alpha_1}}\|\Delta A\|,\sqrt{\frac{2}{\alpha_1}} \|\Delta B\|\right\}$. A linear fitting yields a slope of approximately $1.0236$, which quantitatively confirms the first-order perturbation relation $\|P-P_0\|=O(C_{12})$ given by \eqref{LQR: first-order perturbation}. This observation indicates that the proposed nonlinear optimality robustness framework not only provides a rigorous perturbation bound for the LQR setting, but also accurately captures the intrinsic first-order sensitivity structure of the ARE solution with respect to model mismatch.

\section{Computational evaluation of optimality deviation}\label{6.computation}
Section~\ref{4.optimality} has established quantitative characterizations of optimality deviation in terms of both value function and controller. To complement these theoretical results, this section focuses on their computational realization. First, we present a unified approach for computing diverse forms of optimality deviation. Numerical simulations are then conducted to validate the proposed robustness analysis.

\subsection{Unified computation across variants}
Recall that Theorems~\ref{Thm: performance deviation} and~\ref{Thm: controller deviation} explicitly characterize the optimality deviations in both performance and controller through the nominal optimal value function $V_0^*$ and the error bound coefficients $(c_1,c_2)$ for model mismatch. Despite these theoretical characterizations, a closer inspection of \eqref{optimal performance deviation} and \eqref{controller deviation} reveals a computational bottleneck, which is the main focus of this subsection.

Taking the performance deviation for instance, Theorem~\ref{Thm: performance deviation} and its proof have provided two forms of the deviation bound, \eqref{optimal performance deviation} and \eqref{optimal performance deviation version 2}, both of which necessitates the computation of worst-case value function $V_r^*$. Moreover, an alternative form of performance deviation can be derived.
\begin{corollary}\label{Coro: performance deviation}
	Due to the model mismatch, applying $u_0^*$ to the actual nonlinear system \eqref{original system} leads to an extra cost in \eqref{optimal control problem}. With Assumptions~\ref{Assum: running cost}-\ref{Assum: error bound}, this extra cost satisfies
	\begin{equation}\label{optimal performance deviation version 3}
		\|V-V_0^*\|\leq C_{12}\left(V_0^*\int_{0}^\infty\|\nabla V_r^*\|^2\mathrm{d}t\right)^{ \frac{1}{2}},
	\end{equation}
	where $V_r^*$ is solved by \eqref{actual HJI}, and the integral term of $\|\nabla V_r^*\|^2$ is evaluated along nominal optimal trajectory, i.e.,
	\begin{equation}\label{nominal optimal trajectory}
		\dot{x}(t)=\hat{f}(x(t),u_0^*(t)),\ x(0)=x.
	\end{equation}
\end{corollary}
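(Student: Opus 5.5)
The plan is to mirror the proof of Theorem~\ref{Thm: performance deviation}, but with the roles of the two trajectories exchanged. There the difference $V_r^*-V_0^*$ was integrated along the worst-case trajectory \eqref{nominal worst trajectory u0}. Here I will differentiate the same difference along the nominal optimal trajectory \eqref{nominal optimal trajectory}. Along \eqref{nominal optimal trajectory}, the nominal HJB \eqref{nominal HJB} gives $\frac{\mathrm{d}}{\mathrm{d}t}V_0^*=-l(x,u_0^*)$. The worst-case equation \eqref{actual HJI} gives
\[
\frac{\mathrm{d}}{\mathrm{d}t}V_r^*=(\nabla V_r^*)^\top\hat{f}(x,u_0^*)=-l(x,u_0^*)-(c_1\|x\|+c_2\|u_0^*\|)\|\nabla V_r^*\|.
\]
Subtracting,
\[
\frac{\mathrm{d}}{\mathrm{d}t}(V_r^*-V_0^*)=-(c_1\|x(t)\|+c_2\|u_0^*(t)\|)\|\nabla V_r^*(x(t))\|.
\]
This identity is cleaner than the one in Theorem~\ref{Thm: performance deviation} because the term $\nabla V_0^*$ drops out entirely.

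Next I integrate over $[0,\infty)$. The nominal closed loop is asymptotically stable (the case $r=0$ of Theorem~\ref{Thm: Robust Stability}, or the Lyapunov argument $\dot V_0^*=-l$), so $x(t)\to0$ and both $V_r^*(x(t))$ and $V_0^*(x(t))$ vanish at infinity. This yields the exact representation
\[
V_r^*(x)-V_0^*(x)=\int_0^\infty(c_1\|x\|+c_2\|u_0^*\|)\|\nabla V_r^*\|\,\mathrm{d}t .
\]
Applying Cauchy--Schwarz gives
\[
V_r^*-V_0^*\le\Big(\int_0^\infty\|\nabla V_r^*\|^2\mathrm{d}t\Big)^{1/2}\Big(\int_0^\infty(c_1\|x\|+c_2\|u_0^*\|)^2\mathrm{d}t\Big)^{1/2}.
\]
I then repeat the chain \eqref{combine 2} verbatim. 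The only change is that the final integral is now $\int_0^\infty l(x,u_0^*)\,\mathrm{d}t$ along the \emph{nominal} trajectory, which equals $V_0^*(x)$ rather than $V_r^*(x)$. So the second factor is at most $C_{12}(V_0^*)^{1/2}$. No quadratic inequality needs to be solved, which is why the resulting bound is simpler than \eqref{optimal performance deviation}.

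Finally, I transfer the bound to $V$ exactly as in Theorem~\ref{Thm: performance deviation}. Since $r_0^*$ is the maximizer, the relations \eqref{value function relations} give $V_0^*\le V_r^*$ and $V\le V_r^*$, hence $V-V_0^*\le V_r^*-V_0^*$. For the lower side I use the minimizing error $r_0^{**}$ and its value function $V_r^{**}$. The identical argument gives $V_0^*-V_r^{**}\le C_{12}\big(V_0^*\int\|\nabla V_r^{**}\|^2\mathrm{d}t\big)^{1/2}$, and $V-V_0^*\ge V_r^{**}-V_0^*$. Combining the two sides yields \eqref{optimal performance deviation version 3}.

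I expect the main obstacle to be this lower side. The statement involves $\nabla V_r^*$, not $\nabla V_r^{**}$, so I need either a comparison of the two gradient-energy integrals or a direct argument. As a first attempt, I would follow the paper's symmetry remark and invoke the same convention used in Theorem~\ref{Thm: performance deviation}. Failing that, I would restrict the claimed bound to the (practically relevant) extra-cost direction $V\ge V_0^*$, where only $V_r^*$ enters. A minor technical point is the finiteness of $\int\|\nabla V_r^*\|^2\mathrm{d}t$; if it diverges the bound is trivial, and otherwise $V_r^*$ being $\mathcal{C}^1$ on compact $\mathbb{X}$ with exponential-type decay of the nominal trajectory should suffice.
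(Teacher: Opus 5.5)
Your argument for the extra-cost direction is the paper's proof. You differentiate $V_r^*-V_0^*$ along the nominal trajectory \eqref{nominal optimal trajectory} using \eqref{nominal HJB} and \eqref{actual HJI}, so that $\nabla V_0^*$ drops out. You then integrate, apply Cauchy--Schwarz, and rerun \eqref{combine 2} with $\int_0^\infty l(x,u_0^*)\,\mathrm{d}t=V_0^*$, which removes the quadratic inequality. Together with $V\le V_r^*$ from \eqref{value function relations}, this rigorously gives $V-V_0^*\le C_{12}\bigl(V_0^*\int_0^\infty\|\nabla V_r^*\|^2\mathrm{d}t\bigr)^{1/2}$.

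The lower side is a genuine gap, and your first attempt (invoking the symmetry remark of Theorem~\ref{Thm: performance deviation}) does not close it. The symmetric argument along the same nominal trajectory gives the exact identity $V_0^*-V_r^{**}=\int_0^\infty(c_1\|x\|+c_2\|u_0^*\|)\|\nabla V_r^{**}\|\,\mathrm{d}t$. Hence you get a bound in terms of $\int_0^\infty\|\nabla V_r^{**}\|^2\mathrm{d}t$, not $\int_0^\infty\|\nabla V_r^*\|^2\mathrm{d}t$. Nothing in Assumptions~\ref{Assum: running cost}--\ref{Assum: error bound} lets you compare $\|\nabla V_r^{**}\|$ with $\|\nabla V_r^*\|$. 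Nor do they let you show $V_0^*-V_r^{**}\le V_r^*-V_0^*$, which would need something like convexity of $r\mapsto J(u_0^*,x,r)$; that is unavailable for nonlinear dynamics.

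The paper's proof does not supply this step either. It only says ``follow \eqref{combine 1} and \eqref{combine 2}'', inheriting from Theorem~\ref{Thm: performance deviation} the assertion $\|V-V_0^*\|\le V_r^*-V_0^*$, which is justified only when $V\ge V_0^*$. So what you and the paper actually prove is the one-sided extra-cost bound. A correct two-sided statement replaces the gradient-energy integral by the larger of $\int_0^\infty\|\nabla V_r^*\|^2\mathrm{d}t$ and $\int_0^\infty\|\nabla V_r^{**}\|^2\mathrm{d}t$, both along \eqref{nominal optimal trajectory}. Your fallback of restricting to $V\ge V_0^*$ is the honest conclusion if you want to keep $\nabla V_r^*$ alone.
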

\begin{proof}
	Following similar procedures to \eqref{nominal worst-case maximum}-\eqref{actual HJI} in the proof of Theorem~\ref{Thm: performance deviation}, we can calculate the time derivative of $V_r^*-V_0^*$ along trajectory \eqref{nominal optimal trajectory}, satisfying
	\begin{equation*}
		\frac{\mathrm{d}}{\mathrm{d}t}(V_r^*-V_0^*)=-(c_1\|x\|+c_2\|u_0^*\|)\|\nabla V_r^*\|,
	\end{equation*}
	which is obtained by combining \eqref{nominal HJB} and \eqref{actual HJI}. The upper bound for extra cost \eqref{optimal performance deviation version 3} can be obtained following \eqref{combine 1} and \eqref{combine 2}, then the proof is completed.
\end{proof}

Up to this point, three different characterizations of performance deviation have been established. These variants provide complementary ways to evaluate the optimality deviation under different computational requirements. In particular, once the nominal optimal value function $V_0^*$ is available, one may first solve $V_r^*$ and directly evaluate the deviation through \eqref{optimal performance deviation version 3}. Alternatively, the worst-case trajectory \eqref{nominal worst trajectory u0} can be constructed based on $V_r^*$, allowing the performance deviation bound to be computed with \eqref{optimal performance deviation} or \eqref{optimal performance deviation version 2}.

In practical situations where explicit trajectory computation is inconvenient, the integral term $\int_{0}^\infty\|\nabla V_0^*\|^2\mathrm{d}t$ in \eqref{optimal performance deviation} and \eqref{optimal performance deviation version 2} can also be approached, which further provides a tractable computation of the optimality deviation. Specifically, denote
\begin{equation}
	\begin{aligned}
		J_{\text{grad}}(x,u_0^*,r)=\int_{0}^\infty\|\nabla V_0^*\|^2\mathrm{d}t,& \\
		\dot{x}(t)=\hat{f}(x(t),u_0^*(t))+r(x(t),u_0^*(t)),&\ x(0)=x.
	\end{aligned}
\end{equation}
Then, $J_{\text{grad}}$ can be regarded as a cost functional in an optimization problem, whose maximum satisfies the HJI equation
\begin{equation}
	\max_{r\in\mathcal{R}}\left\{(\nabla V_{\text{grad}})^\top\left[\hat{f}(x,u_0^*)+r(x,u_0^*) \right]+\|\nabla V_0^*\|^2\right\}=0,
\end{equation}
equivalently written as
\begin{equation}\label{compute maximum integral V_0 gradient}
	(\nabla V_{\text{grad}})^\top\hat{f}(x,u_0^*)+\|\nabla V_0^*\|^2=-(c_1\|x\|+c_2\|u_0^*\|) \|\nabla V_{\text{grad}}\|.
\end{equation}
Note that \eqref{actual HJI} and \eqref{compute maximum integral V_0 gradient} shares a common structure of partial differential equation (PDE). Thus, the performance deviation with the form \eqref{optimal performance deviation} can be computationally characterized once $V_0^*$ is available. In this way, the explicit computation of optimality deviation leads to a unified computational formulation despite different variants. To solve the PDE \eqref{actual HJI} (or \eqref{compute maximum integral V_0 gradient}), an iterative scheme is developed in \Cref{Algorithm 1}.

\begin{algorithm}[H]
	\caption{Iterative Computation of $V_r^*$}\label{Algorithm 1}
	\begin{algorithmic}
		\STATE \textbf{Initialization}
		\STATE \hspace{0.5cm}{Set $k=0$, choose an initial guess $ V_r^{(0)} $ and a sufficiently small $\varepsilon>0$.} 
		\STATE \textbf{Worst-case Error Update}
		\STATE \hspace{0.5cm}{Compute the worst-case approximation error based on the current value function $V_r^{(k)}$, i.e.,
			\begin{equation}\label{Policy update: VF}
				r^{(k)}(x,u_0^*)=(c_1\|x\|+c_2\|u_0^*\|)\dfrac{\nabla V_r^{(k)}}{\left\|\nabla V_r^{(k)} \right\|}.
		\end{equation}}
		\STATE \textbf{Value Function Update}
		\STATE \hspace{0.5cm}{Update the value function $V_r^{(k+1)}$ by solving the PDE
			\begin{equation}\label{Policy evaluation: VF}
				\left(\nabla V_r^{(k+1)}\right)^\top\left[\hat{f}(x,u_0^*)+r^{(k)}(x,u_0^*)\right]= -l(x,u_0^*).
		\end{equation}}
		\STATE \textbf{Convergence Check}
		\STATE \hspace{0.5cm}{If $\|V_{r}^{(k+1)}-V_{r}^{(k)}\|<\epsilon$, stop iteration and return $V_{r}^{(k+1)}$. Otherwise, set $k=k+1$ and continue the iteration.}
	\end{algorithmic}
	\label{alg1}
\end{algorithm}

The convergence of \Cref{Algorithm 1} is ensured by the following theorem.
\begin{theorem}\label{Thm: Convergence}
	Let Assumptions~\ref{Assum: running cost}-\ref{Assum: error bound} hold and the robust stability criterion \eqref{robust stability criterion} be satisfied. Then, the sequence $\{V_r^{(k)}\}$ generated by Algorithm~1 converges to the solution $V_r^*$ of~\eqref{actual HJI}.
\end{theorem}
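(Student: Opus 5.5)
Algorithm~1 is a policy iteration in which the maximizing player is the disturbance $r$ and the controller $u_0^*$ is held fixed, so I would prove Theorem~\ref{Thm: Convergence} with the classical policy-iteration argument for HJI/HJB equations: each iterate is well defined and stabilizing, the sequence is monotonically nondecreasing, it is bounded above, and its limit satisfies~\eqref{actual HJI}.

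\textbf{Well-posedness.} For every $k$, $\|r^{(k)}(x,u_0^*)\|=c_1\|x\|+c_2\|u_0^*\|$, hence $r^{(k)}\in\mathcal{R}$. Theorem~\ref{Thm: Robust Stability} then shows that the closed loop $\dot x=\hat f(x,u_0^*)+r^{(k)}$ is asymptotically stable with Lyapunov function $V_0^*$, and $\mathbb{X}$ is forward invariant. Consequently the linear first-order PDE~\eqref{Policy evaluation: VF}, with $V_r^{(k+1)}(0)=0$, has the unique solution
\[
V_r^{(k+1)}(x)=\int_0^\infty l(x(t),u_0^*(t))\,\mathrm{d}t=J(u_0^*,x,r^{(k)}),
\]
taken along that closed loop. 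This integral is finite: integrating the decay estimate from the proof of the Proposition gives $\int_0^\infty\|x\|^2\,\mathrm{d}t\le V_0^*/\mu$, and the bound on $\int_0^\infty\|u_0^*\|^2\,\mathrm{d}t$ follows similarly from~\eqref{time derivative of V0}. Two conventions are needed at this step. Where $\nabla V_r^{(k)}=0$, set $r^{(k)}=0$, as in the proof of Theorem~\ref{Thm: performance deviation}. Choose the initial guess $V_r^{(0)}$ so that $r^{(0)}$ is admissible; for example $V_r^{(0)}=V_0^*$.

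\textbf{Monotonicity and boundedness.} First, $V_r^{(k+1)}=J(u_0^*,x,r^{(k)})\le\max_{r\in\mathcal{R}}J(u_0^*,x,r)=V_r^*$. Next, define the Hamiltonian $H(V,r)=\nabla V^\top(\hat f+r)+l$. Since $r^{(k+1)}$ maximizes $\nabla V_r^{(k+1)\top}r$ over $\mathcal{R}$,
\[
H(V_r^{(k+1)},r^{(k+1)})\ \ge\ H(V_r^{(k+1)},r^{(k)})=0 .
\]
Subtracting the equation $H(V_r^{(k+2)},r^{(k+1)})=0$ gives
\[
\frac{\mathrm{d}}{\mathrm{d}t}\bigl(V_r^{(k+2)}-V_r^{(k+1)}\bigr)\ \le\ 0
\]
along the trajectory driven by $r^{(k+1)}$. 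This trajectory tends to the origin, where both functions vanish, so integrating from $t$ to $\infty$ yields $V_r^{(k+2)}\ge V_r^{(k+1)}$. The sequence $\{V_r^{(k)}\}_{k\ge1}$ is therefore nondecreasing and bounded above by $V_r^*$, and it converges pointwise to some $V_\infty\le V_r^*$.

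\textbf{Identifying the limit (main obstacle).} I must show that $V_\infty$ solves~\eqref{actual HJI} and equals $V_r^*$. Pointwise monotone convergence alone does not control gradients, and the update~\eqref{Policy update: VF} depends on $\nabla V_r^{(k)}$. My first attempt is the following. Use Dini's theorem on compact $\mathbb{X}$, assuming the limit is continuous, to upgrade to uniform convergence. Obtain equicontinuity of the gradients from uniform $\mathcal{C}^2$ bounds on the iterates, which come from the regularity in Assumption~\ref{Assum: value functions} and smooth dependence of the flows on $r^{(k)}$ away from the origin. Arzel\`a--Ascoli then gives $\nabla V_r^{(k)}\to\nabla V_\infty$, so $r^{(k)}\to r_\infty$, the aligned error built from $V_\infty$. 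Passing to the limit in~\eqref{Policy evaluation: VF} gives exactly~\eqref{actual HJI}.

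For uniqueness, take any solution $V$ of~\eqref{actual HJI}. For an arbitrary $r\in\mathcal{R}$, the inequality $H(V,r)\le 0$ integrates along the $r$-trajectory to $V\ge J(u_0^*,x,r)$. For the aligned $r$, equality holds and gives $V=J(u_0^*,x,r)$. Hence $V=\max_{r\in\mathcal{R}}J(u_0^*,x,r)=V_r^*$, so $V_\infty=V_r^*$. The delicate part is the gradient compactness near the origin, where the normalization $\nabla V/\|\nabla V\|$ is singular. There I would instead use only the fact that $\nabla V^\top r^{(k)}$ equals the continuous quantity $(c_1\|x\|+c_2\|u_0^*\|)\|\nabla V\|$, which converges whenever the gradients do, even where the direction $\nabla V/\|\nabla V\|$ is undefined.
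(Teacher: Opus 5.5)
This is essentially the paper's proof. You use the same monotonicity identity (your Hamiltonian inequality is the paper's $V_r^{(k+1)}-V_r^{(k)}=\int_0^\infty(\nabla V_r^{(k)})^\top\rho\,(w^{(k)}-w^{(k-1)})\,\mathrm{d}t\ge0$ up to an index shift), integrate along the closed loop stabilized by Theorem~\ref{Thm: Robust Stability}, and then use monotone bounded convergence and pass to the limit in~\eqref{Policy evaluation: VF}. Your bound $V_r^{(k+1)}=J(u_0^*,x,r^{(k)})\le V_r^*$ and your verification argument for uniqueness of~\eqref{actual HJI} make explicit two steps the paper only asserts.

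The one unproved step is shared with the paper. Assumption~\ref{Assum: value functions} gives $\mathcal{C}^2$ regularity only for $V^*$ and $V_0^*$, not for the iterates. Moreover, $r^{(k)}$ is generally not $\mathcal{C}^1$: $c_1\|x\|+c_2\|u_0^*\|$ has kinks at the origin and where $u_0^*=0$, and the normalization $\nabla V_r^{(k)}/\|\nabla V_r^{(k)}\|$ is singular at the origin, where every trajectory ends. So the uniform $\mathcal{C}^2$ bound behind your Arzel\`a--Ascoli step is an extra hypothesis, the same one the paper tacitly uses when it passes to the limit.
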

\begin{proof}
	Denote $r^{(k)}(x,u_0^*)=\rho(x,u_0^*)w^{(k)}$, $\rho(x,u)=c_1\|x\|+c_2\|u\|$ and $w^{(k)}=\frac{\nabla V_r^{(k)}}{\|\nabla V_r^{(k)}\|}$. With \eqref{Policy evaluation: VF}, we obtain
	\begin{equation}
		\begin{aligned}
			\nabla\left(V_r^{(k+1)}-V_r^{(k)}\right)^\top\left[\hat{f}(x,u_0^*)+\rho(x,u_0^*)w^{(k)}\right] \\ =-(\nabla V_r^{(k)})^\top\rho(x,u_0^*)\left(w^{(k)}-w^{(k-1)}\right), k\geq1.
		\end{aligned}
	\end{equation}
	Integrating both sides of \eqref{Policy evaluation: VF} along the trajectory
	\begin{equation*}
		\dot{x}=\hat{f}(x,u_0^*)+\rho(x,u_0^*)w^{(k)},\ x(0)=x,
	\end{equation*}
	we have
	\begin{equation}\label{proof of Theorem 4: integration}
		\begin{aligned}
			V_r^{(k+1)}(x)-V_r^{(k)}(x)=\int_{0}^{\infty}(\nabla V_r^{(k)})^\top\rho(x,u_0^*) \left(w^{(k)}-w^{(k-1)}\right)\mathrm{d}t,
		\end{aligned}
	\end{equation}
	since $\rho(x,u)w^{(k)}\in\mathcal{R}$ and then the closed-loop stability is ensured by Theorem~\ref{Thm: Robust Stability}. Note that $w^{(k)}$ updated at each iteration satisfies
	\begin{equation}
		w^{(k)}=\arg\max_{\|w\|\leq 1} (\nabla V_r^{(k)})^\top\rho(x,u_0^*)w,
	\end{equation}
	rendering the right hand side of \eqref{proof of Theorem 4: integration} positive, $\forall x\in \mathbb{X}$, and $V_r^{(k+1)}(x)\geq V_r^{(k)}(x)$. Consequently, the sequence $\{V_r^{(k)}\}$ is monotonically increasing. Meanwhile, the worst-case approximation error at each iteration satisfies $r^{(k)}\in\mathcal{R}$. By Theorem~\ref{Thm: Robust Stability}, the nominal optimal controller $u_0^*$ robustly stabilizes the corresponding closed-loop system under \eqref{robust stability criterion}. As a result, the associated value function remains finite for every iteration, implying that the sequence $\{V_r^{(k)}\}$ is upper-bounded. Therefore, the monotone sequence $\{V_r^{(k)}\}$ converges to a limit function $\bar V_r$. Passing to the limit in \eqref{Policy evaluation: VF} shows that $\bar V_r$ satisfies \eqref{actual HJI}, i.e., $\bar V_r=V_r^*$. The proof is completed.
\end{proof}

Since the linear PDE \eqref{Policy evaluation: VF} for value function update generally admits no analytical solution, a numerical approximation is required at each iteration of \Cref{Algorithm 1}. Among numerous numerical approaches for solving PDEs, we adopt the classical Galerkin projection method \cite{BEARD19972159} owing to its simplicity and compatibility with the proposed iterative computational scheme. Let $\{ \varphi_j(x) \}_{j=1}^N$ be a set of linearly independent basis functions, and the value function $V_r^{(k+1)}(x)$ updated at the $k$-th iteration is approximated as
\begin{equation}
	\hat{V}_r^{(k+1)}(x)=\sum_{j=1}^N\theta_j^{(k+1)}\varphi_j(x)=\left(\theta^{(k+1)}\right)^\top \varphi(x).
\end{equation}
At each value function update step, the Galerkin approximation is constructed for the linear PDE parameterized by the current worst-case error $r^{(k)}$. Substituting this approximation into \eqref{Policy evaluation: VF} yields the residual
\begin{equation}
	R(x)=\left(\theta^{(k+1)}\right)^\top\nabla\varphi(x)\left[\hat{f}(x,u_0^*)+r^{(k)}(x,u_0^*) \right]+l(x,u_0^*).
\end{equation}
By requiring the residual to be orthogonal to the basis functions over the state space $\mathbb{X}$, i.e.,
\begin{equation*}
	\left\langle R(x),\varphi_j(x)\right\rangle_\mathbb{X}=\int_{\mathbb{X}}\varphi_j(x)R(x) \mathrm{d}x=0,\ \forall j=1,\cdots,N,
\end{equation*}
we obtain a set of linear equations
\begin{equation}
	A^{(k)}\theta^{(k+1)}=b,
\end{equation}
where the elements of square matrix $A^{(k)}\in\mathbb{R}^{N\times N}$ and vector $b\in\mathbb{R}^N$ satisfy
\begin{equation}
	\begin{aligned}
		A^{(k)}_{ij}&=\int_{\mathbb{X}}\varphi_i(x)\left[(\nabla\varphi_j)^\top \overline{f}^{(k)}(x) \right]\mathrm{d}x, \\ \overline{f}^{(k)}&=\hat{f}+r^{(k)}, \ b_i=-\int_{\mathbb{X}}\varphi_i(x)l(x,u_0^*)\mathrm{d}x.
	\end{aligned}
\end{equation}
Note that the projection matrix $A^{(k)}$ is updated according to the current worst-case error $r^{(k)}$. Since the Galerkin projection itself is a standard computational technique, its convergence to the exact solution $\hat{V}_r^{(k)}\rightarrow V_r^{(k)}$ as $N \rightarrow \infty$ has been well established in the numerical analysis literature, e.g., \cite{BEARD19972159}, and hence is omitted here for brevity.

\subsection{Simulation results}
To validate the proposed robustness analysis, we consider a nonlinear control-affine system governed by
\begin{equation}
	\begin{bmatrix} \dot{x}_1 \\ \dot{x}_2 \end{bmatrix} = \begin{bmatrix} -x_1+x_2 \\ -\frac{1}{2}(x_1+x_2) +\frac{1}{2}x_1^2x_2 \end{bmatrix} + \begin{bmatrix} 0 \\ x_1 \end{bmatrix}u.
\end{equation}
The running cost is defined as $l(x,u)=\frac{1}{2}(x^\top x+u^\top u)$. For this benchmark problem, the actual optimal value function given by \eqref{actual HJB} can be analytically verified as $V^*(x) = \frac{1}{4}x_1^2 + \frac{1}{2}x_2^2$, which yields the actual optimal controller $u^*(x) = -x_1x_2$ and serves as the a priori ground truth for evaluation of optimality deviation.

To emulate a practical surrogate modeling situation, we consider a scenario where the dominant nonlinear structures are successfully captured, while the corresponding coefficients contain estimation errors. Accordingly, we employ a family of surrogate models where both the drift dynamics and the control effectiveness are intentionally perturbed, leading to a nominal model of the form
\begin{equation*}
	\begin{bmatrix} \dot{\hat{x}}_1 \\ \dot{\hat{x}}_2 \end{bmatrix} = \begin{bmatrix} -x_1+x_2 \\ -\frac{1}{2}(x_1+x_2)+\frac{1}{2}(1-\delta_1) x_1^2x_2 \end{bmatrix} + \begin{bmatrix} 0 \\ (1-\delta_2)x_1 \end{bmatrix}u.
\end{equation*}
The resulting approximation error $r(x,u)=[0\ \frac{1}{2}\delta_1x_1^2x_2 +\delta_2x_1u]^\top$ naturally satisfies Assumption~\ref{Assum: error bound} on a compact operating region $\mathbb{X}=[-M,M]^2$, from which the corresponding uncertainty bounds can be explicitly estimated as $c_1 = \frac{1}{2}|\delta_1|M^2$ and $c_2 = |\delta_2| M$. By adjusting the parameters $\delta_1,\delta_2$, different levels of state- and input-dependent modeling inaccuracies can be generated, enabling a systematic investigation of their influence on closed-loop stability and optimality robustness.

For each surrogate model, the nominal optimal value function $V_0^*$ and the associated feedback controller $u_0^*$ are obtained by solving \eqref{nominal HJB} via the Galerkin projection, where polynomial basis functions up to the 4th order are employed. Subsequently, \Cref{Algorithm 1} is implemented to compute the worst-case value function $V_r^*$. Under the setting $M=1.5, \delta_1=0.15, \delta_2=0.2$, the iterative convergence behavior of \Cref{Algorithm 1} is illustrated in \Cref{fig:convergence}. As guaranteed by \Cref{Thm: Convergence}, the Galerkin projection coefficients $\theta_j$ converge rapidly within a few iterations, yielding a numerically efficient solution of underlying PDE~\eqref{actual HJI}.

\begin{figure}[htbp]
	\centering
	\includegraphics[width=0.7\linewidth]{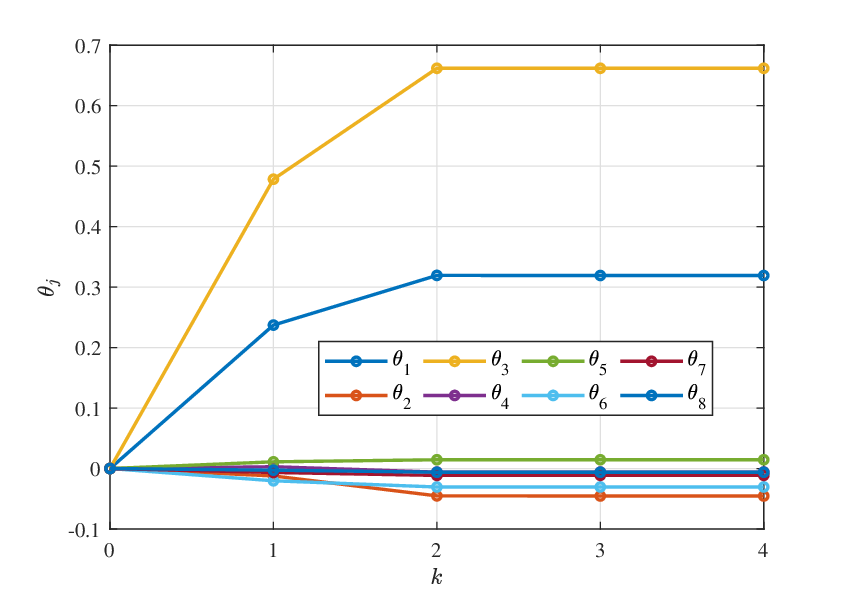}
	\caption{Convergence behavior of the Galerkin expansion coefficients $\theta_j$ via \Cref{Algorithm 1} starting from a zero initialization ($k=0$).}
	\label{fig:convergence}
\end{figure}

\begin{figure}[htbp]
	\centering
	\includegraphics[width=\textwidth]{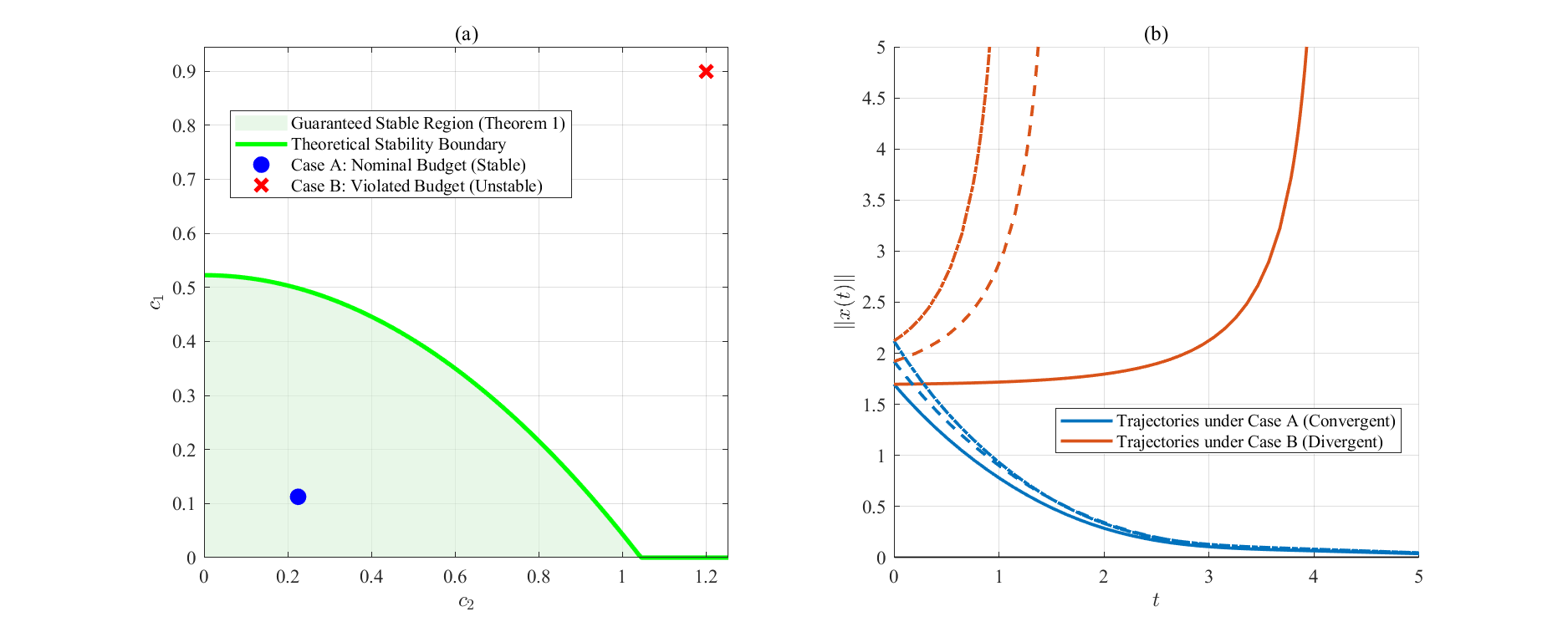}
	\caption{Joint macro-micro verification of robust stability under model mismatch. (a) Theoretical robust stability budget region for the uncertainty bounds $c_1$ and $c_2$ based on Theorem~\ref{Thm: Robust Stability}. (b) State norm of trajectories under Case A (convergent) and Case B (divergent) initialized from $(1.2, 1.2)$ (solid), $(-1.5, -1.5)$ (dash-dot), and $(1.2, 1.5)$ (dashed), respectively.}
	\label{fig:stability}
\end{figure}

We next examine the effectiveness and predictive capability of robust stability criterion \eqref{robust stability criterion} established in Theorem~\ref{Thm: Robust Stability}. \Cref{fig:stability} illustrates the relation between the theoretical stability budget region and the actual closed-loop behavior under two representative cases. As shown in \Cref{fig:stability}(a), the green region characterizes the uncertainty budget for $(c_1,c_2)$ to guarantee the stability preservation based on \Cref{Thm: Robust Stability}. Case A lies strictly inside the theoretically guaranteed stable region, whereas Case B violates the derived robust stability criterion~\eqref{robust stability criterion}, indicating potential instability. The corresponding closed-loop behaviors are reported in \Cref{fig:stability}(b). Consistent with the theoretical prediction, the system under Case A exhibits asymptotic convergence to the origin, while trajectories under Case B become unstable and diverge over time.

\begin{figure}[htbp]
	\centering
	\includegraphics[width=0.75\textwidth]{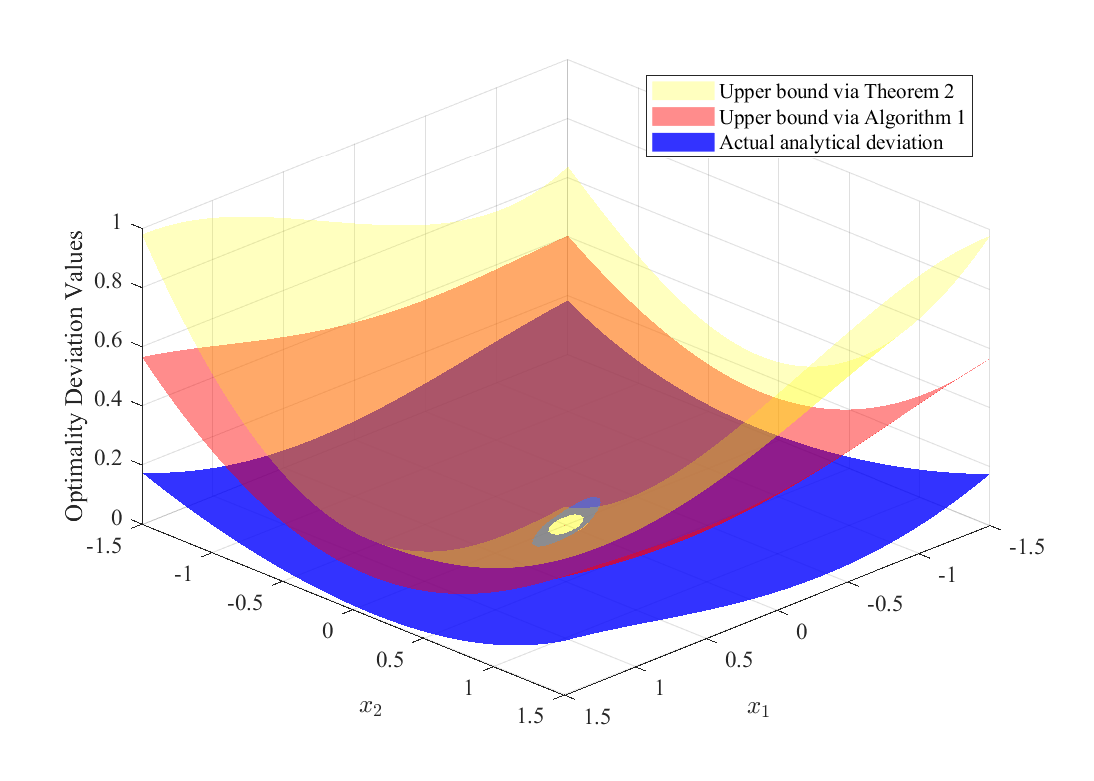} 
	\caption{Validation and hierarchical comparison of the performance deviation bounds over the entire state space. The actual optimality deviation $\|V^*(x) - V_0^*(x)\|$ (blue surface) lies at the bottom of stacked surfaces. The intermediate surface represents the bound $\|V_r^*(x)-V_0^*(x)\|$ constructed via Algorithm~1 (red surface), while the outermost one corresponds to the analytical bound derived in Theorem~\ref{Thm: performance deviation} (yellow surface). The strict ordering structure among the three surfaces confirms the validity of proposed theoretical results.}
	\label{fig:optimality_bound}
\end{figure}

Based on the computed quantities $V_0^*$ and $V_r^*$, the theoretical upper bound for the performance deviation established in \Cref{Thm: performance deviation} is evaluated over the entire state space. Meanwhile, the actual deviation $\|V^*-V_0^*\|$ can be directly computed and compared with the proposed bound, as shown in \Cref{fig:optimality_bound}. The contour plots indicate that the bound successfully captures the spatial distribution of optimality degradation and provides a uniform upper estimate of the actual performance deviation in the considered operating region $\mathbb{X}$. Further, \Cref{fig:controller} visualizes the state-dependent gap between the analytical upper bound and the actual controller deviation, defined as the difference between the right-hand side (RHS) and left-hand side (LHS) of \eqref{controller deviation}. The rigorous non-negativity of the bounding gap over the entire state space $\mathbb{X}$ confirms the validity of Theorem~\ref{Thm: controller deviation}.

\begin{figure}[htbp]
	\centering
	\includegraphics[width=0.7\linewidth]{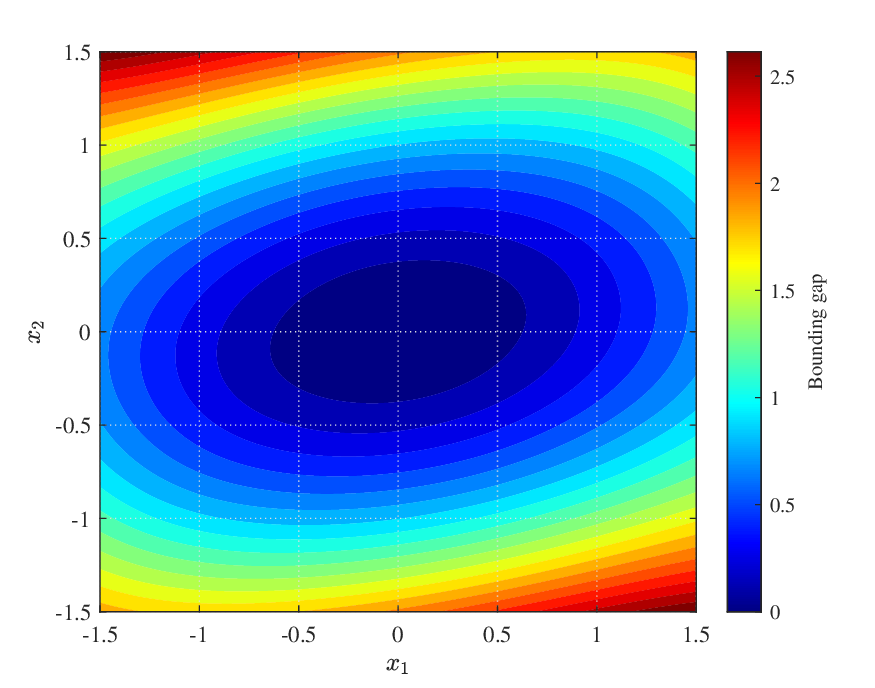}
	\caption{State-dependent bounding gap between the analytical upper bound and the actual controller deviation ($\text{RHS} - \text{LHS}$ of \eqref{controller deviation}), verifying the non-negativity required by Theorem~\ref{Thm: controller deviation}.}
	\label{fig:controller}
\end{figure}

Overall, the simulations consistently support the developed theoretical results. The derived bounds are based on the worst-case uncertainty characterization, which ensures rigorous validity under general nonlinear settings while inherently introduces some conservatism. This reflects the intrinsic gap between worst-case analysis and pointwise system behavior.
\section{Conclusions}\label{7.conclusion}
This paper has established a systematic framework for analyzing the robustness of stability and optimality in nonlinear optimal control. We have demonstrated when closed-loop stability is preserved, how model mismatch affects the achieved performance and the resulting controller, and how the associated optimality deviations can be computationally evaluated through a unified formulation. Furthermore, the LQR specialization reveals a close connection between nonlinear optimality degradation and classical Riccati perturbation analysis, providing additional insights into the underlying robustness mechanism.

Possible directions for future research include extending the present analysis to broader uncertainty structures, incorporating adaptive schemes for controller design, and investigating optimality robustness in more complex and large-scale systems.

\bibliographystyle{siamplain}
\bibliography{references}
\end{document}